\newtheorem{thm}{Theorem}[section]
\newtheorem{cor}[thm]{Corollary}
\newtheorem{rem}[thm]{Remark}
\numberwithin{equation}{section}
\title{A higher order Painlev\'e system in two variables and extensions of the Appell hypergeometric functions $F_1$, $F_2$ and $F_3$}
\date{}
\author{Takao Suzuki}
\address{Department of Mathematics, Kinki University, 3-4-1, Kowakae, Higashi-Osaka, Osaka 577-8502, Japan}
\email{suzuki@math.kindai.ac.jp}
\begin{document}

\maketitle

\begin{abstract}
In this article we propose an extension of Appell hypergeometric function $F_2$ (or equivalently $F_3$).
It is derived from a particular solution of a higher order Painlev\'e system in two variables.
On the other hand, an extension of Appell's $F_1$ was introduced by Tsuda.
We also show that those two extensions are equivalent at the level of systems of linear partial differential equations.

Key Words: Painlev\'{e} equations, Isomonodromy deformations, Hypergeometric functions, Appell and Lauricella functions.

2010 Mathematics Subject Classification: 34M56, 33C65.
\end{abstract}

\section{Introduction}

Thanks to the previous works \cite{HF,Katz,Kos,O1,O2} we have a good classification theory of isomonodromy deformation equations of Fuchsian systems; we call them {\it higher order Painlev\'e systems}.
From this point of view several extensions of the Painlev\'e VI equation have been proposed in \cite{FISS,G,Kaw,Sak,Su2,T2}.

It is well known that the Painlev\'e VI equation (resp. the Garnier system) admits a particular solution in terms of the Gauss hypergeometric function ${}_2F_1$ (resp. Appell's $F_1$ or Lauricella's $F_D$); see \cite{Fuc,G,IKSY}.
In recent years an investigation of hypergeometric solutions of Painlev\'e systems has been developed greatly; see \cite{G,Su1,Su2,T1}.
We list the obtained hypergeometric solutions of 4th and 6th order Painlev\'e systems in the following table.
\[\begin{array}{|l|l|l|}\hline
	\text{Painlev\'e system} & \text{Rigid system} & \text{HGF} \\\hline
	\mathcal{H}^{11\times5} \ (=\mathcal{H}^{21\times5}) & 21\times4/21\times4 \ (P_3/P_3) & F_1 \\
	\mathcal{H}^{21,21,111,111} & 21,111,111 & {}_3F_2 \\\hline
	\mathcal{H}^{11\times6} \ (=\mathcal{H}^{31\times6}) & 31\times5/31\times5/31\times5 \ (P_4/P_4/P_4) & F_D \\
	\mathcal{H}^{21,21,21,21,111} \ (=\mathcal{H}^{31,31,31,22,211}) & 31,31,22,211/31,31,22,211 \ (I_4/I_4,\mathrm{II}^*_2/\mathrm{II}^*_2) & F_2,F_3 \\
	\mathcal{H}^{31,31,22,22,22} & 31,22,22,22/31,22,22,22 \ (P_{4,4}/P_{4,4}) & F_4 \\
	\mathcal{H}^{21,111,111,111} \ (=\mathcal{H}^{31,211,211,211}) & 211,211,211 \ (\mathrm{II}_2) &  \\
	\mathcal{H}^{31,22,211,1111} & 22,211,1111 \ (EO_4) &  \\
	\mathcal{H}^{31,31,1111,1111} & 31,1111,1111 & {}_4F_3 \\\hline
\end{array}\]
The symbol $\mathcal{H}^{\lambda}$ stands for a Painlev\'e system which is derived from the Fuchsian system with the spectral type $\lambda$.
Note that the Garnier system in two (resp. three) variables is denoted by $\mathcal{H}^{11\times5}$ (resp. $\mathcal{H}^{11\times6}$).
The equations $P_3,P_4$ are members of the Jordan-Pochhammer's family, $I_4,\mathrm{II}_2,\mathrm{II}^*_2$ are in \cite{Y}, $P_{4,4}$ is in \cite{O2} and $EO_4$ is a member of the even family appeared in \cite{Sim}.

\begin{rem}
A spectral type is a multiplicity data of the characteristic exponents (or the eigenvalues of the residue matrices).
In this article we introduce one example instead of the definition.
A spectral type of the Fuchsian system with the Riemann scheme
\[
	\left\{\begin{array}{cccc}
		x=t & x=1 & x=0 & x=\infty \\
		\theta_1 & \theta_2 & \kappa_1 & \rho_1 \\
		0 & 0 & \kappa_1 & \rho_1 \\
		0 & 0 & 0 & \rho_2 \\
		0 & 0 & 0 & \rho_3 \\
	\end{array}\right\},
\]
is $\{(3,1),(3,1),(2,2),(2,1,1)\}$.
In the above table we denote it by $31,31,22,211$ for sake of simplicity.
We also denote $21,21,21,21$ by $21\times4$.
\end{rem}

\begin{rem}
Since the Fuchsian system of type $11\times5$ is transformed by Katz's two operations, namely addition and middle convolution, to the one of type $21\times5$, the Painlev\'e system $\mathcal{H}^{11\times5}$ is equivalent to $\mathcal{H}^{21\times5}$; see \cite{HF}.
It is the same for the other systems.
\end{rem}

\begin{rem}
In the case of Painlev\'e systems in multi-variables the notation of rigid systems is interpreted as follows.
We choose $\mathcal{H}^{11\times5}$, namely the Garnier system in two variables, as an example.
Under a certain specialization the system $\mathcal{H}^{11\times5}$ is reduced to a linear Pfaff one in two variables whose solution is described in terms of Appell's $F_1$.
If we regard one variable as an independent variable and another as a constant, then this Pfaff system becomes a rigid one of type $21\times4$.
\end{rem}

Let $\lambda_i$ $(i=2,\ldots,m+3)$ be partitions of a natural number $n+1$.
By an observation of the above table we expect that the $2n$-th order Painlev\'e system $\mathcal{H}^{n1,\lambda_2,\ldots,\lambda_{m+3}}$ admits a particular solution in terms of the $(n+1)$-st order rigid system of type $\lambda_2,\ldots,\lambda_{m+3}$.
It is probably true in the case of $m=1$, namely the Painlev\'e system is an ordinary differential one; see Section 5 of \cite{Su2}.
However, in the case of $m\geq2$, the situation is more complicated.
As is seen later, a Painlev\'e system (or a linear Pfaff system) in multi-variables doesn't always contain only one type of rigid system.
Therefore we have to investigate the case of $m\geq2$ more deeply in order to go toward a unified theory of Painlev\'e systems, rigid systems and hypergeometric functions.

As a first step of our purpose we investigate {\it the Schlesinger-Tsuda system} $\mathcal{H}_{n+1,2}$ given in \cite{T2} and its particular solution.
The system $\mathcal{H}_{n+1,2}$ is derived from the Fuchsian system of type $\{(n,1),(n,1),(n,1),(1,\ldots,1),(1,\ldots,1)\}$.
According to Oshima's work \cite{O1,O2}, this Fuchsian system is transformed by Katz's two operations to the following two types of systems.
\[\begin{array}{c}
	\{(2n,1),(2n,1),(n+1,n),(n+1,1,\ldots,1),(n+1,1,\ldots,1)\} \\
	\{(2n,1),(2n,1),(2n,1),(2,\ldots,2,1),(2,\ldots,2,1)\}
\end{array}\]
Hence we expect that the following two types of rigid systems appear in a particular solution of $\mathcal{H}_{n+1,2}$.
\[\begin{array}{rl}
	I_{2n+1}: & \{(2n,1),(n+1,n),(n+1,1,\ldots,1),(n+1,1,\ldots,1)\} \\
	J_{2n+1}: & \{(2n,1),(2n,1),(2,\ldots,2,1),(2,\ldots,2,1)\}
\end{array}\]
In \cite{T1,T3} the Pfaff system $I_{2n+1}/I_{2n+1}$ has already derived from $\mathcal{H}_{n+1,2}$.
In this article we look for another particular solution of $\mathcal{H}_{n+1,2}$ and derive a Pfaff system $J_{2n+1}/I_{2n+1}$.
We also present a hypergeometric function
\[
	F_2^{(n)}\left[\begin{array}{c}b_1,\ldots,b_n,b',a\\c_1,\ldots,c_n,c'\end{array};t_1,t_2\right] = \sum_{i,j=0}^{\infty}\frac{(b_1)_i\ldots(b_n)_i(b')_j(a)_{i+j}}{(c_1)_i\ldots(c_n)_i(c')_j(1)_i(1)_j}t_1^it_2^j,
\]
as an extension of Appell's $F_2$ (or equivalently $F_3$).
The solution of $J_{2n+1}/I_{2n+1}$ is described in terms of $F_2^{(n)}$ with $a=c_1+c'-2$.
Note that the function $F_2^{(n)}$ without constraints between parameters gives the solution of a Pfaff system $J_{2n+2}/I_{2n+2}$.

\begin{rem}
In this article we identify the hypergeometric functions $F_3$ with $F_2$.
In fact, the system of linear partial differential equations (LPDEs) which $F_2$ satisfies
\begin{align*}
	&\{t_1(\delta_1+\delta_2+a)(\delta_1+b)-\delta_1(\delta_1+c-1)\}z = 0,\\
	&\{t_2(\delta_1+\delta_2+a)(\delta_2+b')-\delta_2(\delta_2+c'-1)\}z = 0,
\end{align*}
where $\delta_1=t_1\partial/\partial t_1$, $\delta_2=t_2\partial/\partial t_2$, is transformed by $(t_1,t_2)\to(1/t_1,1/t_2)$ to the system which $F_3$ satisfies.
Then each solution of transformed $F_3$-system is a linear combination of four independent solutions of $F_2$-system; see \cite{E}.
\end{rem}

On the other hand, the hypergeometric function $F_{n+1,2}$ was introduced by Tsuda as an extension of Appell's $F_1$ in \cite{T1}.
The solution of $I_{2n+1}/I_{2n+1}$ is described in terms of $F_{n+1,2}$; see \cite{T3}.
As the system $I_{2n+1}/I_{2n+1}$ is transformed by $(t_1,t_2)\to(1/t_1,t_2/t_1)$ to $J_{2n+1}/I_{2n+1}$, we can predict that the function $F_2^{(n)}$ with $a=c_1+c'-2$ is equivalent to $F_{n+1,2}$.
In this article it is shown to be true at the level of systems of LPDEs.
The relationship between $F_2$ and $F_1$ was pointed out in \cite{B,Sla}.
Our result becomes an extension of this classical result.

This article is organized as follows.
In Section \ref{Sec:Gen_Appell} a hypergeometric function $F_2^{(n)}$ is defined by a formal power series.
We also give a system of LPDEs and an integral expression for $F_2^{(n)}$.
In Section \ref{Sec:Sch_HGE} a Pfaff system $J_{2n+1}/I_{2n+1}$ is derived from $\mathcal{H}_{n+1,2}$ as a particular solution.
It is shown in Section \ref{Sec:GA_deg_Pfaff_Proof} that this Pfaff system admits a solution in terms of $F_2^{(n)}$.
In Section \ref{Sec:Sch_HGE_deg} a degeneration structure of the particular solution of $\mathcal{H}_{n+1,2}$ is investigated.
In Section \ref{Sec:Rel_F2_F1} a relationship between $F_2^{(n)}$ and $F_{n+1,2}$ is investigated from two points of view, namely a system of LPDEs and an integral expression.
In Section \ref{Sec:Conclusion} we state a summary of this article and some future problem.

\section{An extension of the Appell hypergeometric function $F_2$}\label{Sec:Gen_Appell}

We define a hypergeometric function $F_2^{(n)}$ as an extension of Appell's $F_2$ by a formal power series
\[
	F_2^{(n)}\left[\begin{array}{c}b_1,\ldots,b_n,b',a\\c_1,\ldots,c_n,c'\end{array};t_1,1-t_2\right] = \sum_{i,j=0}^{\infty}\frac{(b_1)_i\ldots(b_n)_i(b')_j(a)_{i+j}}{(c_1)_i\ldots(c_n)_i(c')_j(1)_i(1)_j}t_1^i(1-t_2)^j,
\]
which is convergent in $|t_1|+|1-t_2|<1$.
This series satisfies a system of linear partial differential equations
\begin{equation}\begin{split}\label{Eq:Gen_Appell_2n+2}
	&[t_1(\delta_1+\delta_2+a)\{\prod_{i=1}^{n}(\delta_1+b_i)\}-\delta_1\{\prod_{i=1}^{n}(\delta_1+c_i-1)\}]z = 0,\\
	&\{(1-t_2)(\delta_1+\delta_2+a)(\delta_2+b')-\delta_2(\delta_2+c'-1)\}z = 0,
\end{split}\end{equation}
where $\delta_1=t_1\partial/\partial t_1$, $\delta_2=(t_2-1)\partial/\partial t_2$, 
An integral expression of $F_2^{(n)}$ is given by
\begin{equation}\begin{split}\label{Eq:Gen_Appell_int}
	&F_2^{(n)}\left[\begin{array}{c}b_1,\ldots,b_n,b',a\\c_1,\ldots,c_n,c'\end{array};t_1,1-t_2\right]\\
	&= t_1^{1-c_1}(1-t_2)^{1-c'}\{\prod_{k=1}^{n}\frac{\Gamma(c_k)}{\Gamma(b_k)\Gamma(c_k-b_k)}\}\frac{\Gamma(c')}{\Gamma(b')\Gamma(c'-b')}\int_{0<u_n<\ldots<u_1<t_1,1<u_{n+1}<t_2}U(t)du_1\ldots du_{n+1},
\end{split}\end{equation}
where
\begin{align*}
	U(t) &= (t_1-u_1)^{c_1-b_1-1}\{\prod_{k=1}^{n-1}u_k^{b_k-c_{k+1}}(u_k-u_{k+1})^{c_{k+1}-b_{k+1}-1}\}\\
	&\quad\times u_n^{b_n-1}(u_n-u_{n+1})^{-a}(t_2-u_{n+1})^{c'-b'-1}(1-u_{n+1})^{b'-1}.
\end{align*}
The symbol
\[
	\int_{0<u_n<\ldots<u_1<t_1,1<u_{n+1}<t_2}du_1\ldots du_{n+1},
\]
stands for an iterated integral
\[
	\int_{0}^{t_1}\int_{0}^{u_1}\ldots\int_{0}^{u_{n-1}}\int_{1}^{t_2}du_{n+1}du_n\ldots du_2du_1.
\]
We will prove \eqref{Eq:Gen_Appell_int} in Appendix \ref{App:Gen_Appell_int}.

\begin{rem}
The function $F_2^{(n)}$ is a special case of the Kamp\'e de F\'eriet function given in \cite{AK}.
However the Kamp\'e de F\'eriet function is a wide class of functions and hence it is not obvious to choose a suitable function among them.
\end{rem}

\begin{rem}
A solution of the rigid systems $J_{2n+2}$ and $I_{2n+1}$ has been already given in the form of a formal power series and an integral expression in \cite{O2}.
However our function $F_2^{(n)}$ is more suitable to clarify a relationship with $F_2$.
\end{rem}

If, in system \eqref{Eq:Gen_Appell_2n+2}, we assume that $a=c_1+c'-2$, then we obtain
\begin{equation}\begin{split}\label{Eq:Gen_Appell_2n+1}
	&[t_1(\delta_1+\delta_2+c_1+c'-2)\{\prod_{i=1}^{n}(\delta_1+b_i)\}-\delta_1\{\prod_{i=1}^{n}(\delta_1+c_i-1)\}]z = 0,\\
	&\{(1-t_2)(\delta_1+\delta_2+c_1+c'-2)(\delta_2+b')-\delta_2(\delta_2+c'-1)\}z = 0.
\end{split}\end{equation}
System \eqref{Eq:Gen_Appell_2n+1} can be rewritten to a Pfaff one $J_{2n+1}/I_{2n+1}$; see Section \ref{Sec:Sch_HGE}.
We now choose a solution of \eqref{Eq:Gen_Appell_2n+1}
\[
	z = (1-t_2)^{1-c'}F_2^{(n)}\left[\begin{array}{c}b_1,\ldots,b_n,b'-c'+1,c_1-1\\c_1,\ldots,c_n,2-c'\end{array};t_1,1-t_2\right].
\]
Then we obtain an integral expression of $(\delta_1+c_1-1)z$ as
\begin{equation}\begin{split}\label{Eq:Gen_Appell_deg_int}
	&(1-t_2)^{1-c'}F_2^{(n)}\left[\begin{array}{c}b_1,\ldots,b_n,b'-c'+1,c_1-1\\c_1-1,c_2,\ldots,c_n,2-c'\end{array};t_1,1-t_2\right]\\
	&= t_1^{1-c_2}\{\prod_{k=2}^{n}\frac{\Gamma(c_k)}{\Gamma(b_k)\Gamma(c_k-b_k)}\}\frac{\Gamma(2-c')}{\Gamma(b'-c'+1)\Gamma(1-b')}\int_{0<u_{n-1}<\ldots<u_1<t_1,1<u_n<t_2}U(t)du_1\ldots du_n,
\end{split}\end{equation}
where
\begin{align*}
	U(t) &= (t_1-u_1)^{c_2-b_2-1}\{\prod_{k=2}^{n-1}u_{k-1}^{b_k-c_{k+1}}(u_{k-1}-u_k)^{c_{k+1}-b_{k+1}-1}\}\\
	&\quad\times u_{n-1}^{b_n-1}(u_{n-1}-u_n)^{-b_1}(t_2-u_n)^{-b'}(1-u_n)^{b'-c'}u_n^{b_1-c_1+1}.
\end{align*}
We will prove \eqref{Eq:Gen_Appell_deg_int} in Appendix \ref{App:Gen_Appell_int}.

In the last we state a degeneration from $F_2^{(n)}$ to $F_2^{(n-1)}$ briefly.
Let $\tilde{z}=(\delta_1+c_1-1)z$ and assume that $b_1=c_1-1$.
Then system \eqref{Eq:Gen_Appell_2n+1} can be reduced to
\begin{equation}\begin{split}\label{Eq:Gen_Appell_2n}
	&[t_1(\delta_1+\delta_2+c_1+c'-2)\{\prod_{i=2}^{n}(\delta_1+b_i)\}-\delta_1\{\prod_{i=2}^{n}(\delta_1+c_i-1)\}]\tilde{z} = 0,\\
	&\{(1-t_2)(\delta_1+\delta_2+c_1+c'-2)(\delta_2+b')-\delta_2(\delta_2+c'-1)\}\tilde{z} = 0.
\end{split}\end{equation}
As is seen in Section \ref{Sec:Sch_HGE_deg}, system \eqref{Eq:Gen_Appell_2n} can be rewritten to a Pfaff one $J_{2n}/I_{2n}$.

\section{The Schlesinger-Tsuda system $\mathcal{H}_{n+1,2}$ and its particular solution in terms of $J_{2n+1}/I_{2n+1}$}\label{Sec:Sch_HGE}

The Schlesinger-Tsuda system $\mathcal{H}_{n+1,2}$ given in \cite{T2} is expressed as the Hamiltonian system of $4n$-th order in two variables
\[
	\frac{\partial q_j}{\partial t_i} = \frac{\partial H_i}{\partial p_j},\quad
	\frac{\partial p_j}{\partial t_i} = -\frac{\partial H_i}{\partial q_j},\quad
	\frac{\partial q'_j}{\partial t_i} = \frac{\partial H_i}{\partial p'_j},\quad
	\frac{\partial p'_j}{\partial t_i} = -\frac{\partial H_i}{\partial q'_j}\quad (i=1,2;j=1,\ldots,n).
\]
The Hamiltonian $H_1$ is given by
\begin{align*}
	H_1 &= \frac{H_1^{(t)}}{t_1-t_2}+\frac{H_1^{(1)}}{t_1-1}+\frac{H_1^{(0)}}{t_1},\\
	H_1^{(t)} &= \{\sum_{i=1}^{n}q_i(p_i-p'_i)+\theta_1\}\{\sum_{i=1}^{n}q'_i(p'_i-p_i)+\theta_2\},\\
	H_1^{(1)} &= [\sum_{i=1}^{n}\{(q_i-1)p_i+q'_ip'_i-\kappa_i-\rho_i\}-\theta_3][\sum_{i=1}^{n}q_i\{(q_i-1)p_i+q'_ip'_i-\kappa_i-\rho_i\}-\theta_1],\\
	H_1^{(0)} &= (\sum_{i=1}^{n}q_ip_i+\theta_1)\{\sum_{i=1}^{n}(q_i+q'_i-1)p_i+\kappa_0\} - \sum_{i=1}^{n}\kappa_iq_ip_i\\
	&\quad + \sum_{i=1}^{n}\sum_{j=i+1}^{n}q_ip_j\{(q_i-q_j)p_i+(q'_i-q'_j)p'_i-\kappa_i-\rho_i\}.
\end{align*}
The Hamiltonian $H_2$ is obtained from $H_1$ via a replacement
\[
	t_1\leftrightarrow t_2,\quad
	p_i\leftrightarrow p'_i,\quad
	q_i\leftrightarrow q'_i,\quad
	\theta_1\leftrightarrow\theta_2.
\]

\begin{rem}
Although the system $\mathcal{H}_{n+1,2}$ contains $2n+3$ parameters, we have $2n+4$ parameters, namely $\theta_1,\theta_2,\theta_3,\kappa_0,\ldots,\kappa_n,\rho_1,\ldots,\rho_n$, in the above Hamiltonian.
In fact, we can let any one of $\kappa_0,\ldots,\kappa_n$ be equal to zero without loss of generality.
In this article we leave all of those parameters in order to write some formulas concisely.
\end{rem}

In the system $\mathcal{H}_{n+1,2}$ we assume that
\begin{equation}\label{Eq:Sch_Tsu_PS_F2}
	q_ip_i-\kappa_i-\rho_i = 0,\quad p'_i = 0\quad (i=1,\ldots,n),\quad \theta_1+\sum_{j=1}^{n}(\kappa_j+\rho_j) = 0.
\end{equation}
We also define a dependent variable $w_0$ by
\begin{align*}
	d\log w_0 &= (\sum_{j=1}^{n}p_j+\theta_3)d\log(t_1-1) + \{\sum_{j=1}^{n}(q'_j-1)p_j+\kappa_0+\rho_1\}d\log t_1\\
	&\quad + (-\sum_{j=1}^{n}q'_jp_j+\theta_2)d\log(t_1-t_2),
\end{align*}
where $d$ stands for an exterior derivative for $t_1,t_2$, and set
\[
	w_i = p_iw_0,\quad
	w'_i = -q'_ip_iw_0\quad (i=1,\ldots,n).
\]
Then we obtain the following theorem by a direct computation.

\begin{thm}
A vector of dependent variables $\mathbf{w}={}^t[w_0,w_1,\ldots w_n,w'_1,\ldots,w'_n]$ defined above satisfies a linear Pfaff system
\begin{equation}\label{Eq:Sch_Tsu_Pfaff}
	d\mathbf{w} = \{A_1^{(1)}d\log(t_1-1)+A_0^{(1)}d\log t_1+A_td\log(t_1-t_2)+A_1^{(2)}d\log(t_2-1)+A_0^{(2)}d\log t_2\}\mathbf{w},
\end{equation}
with matrices
\begin{align*}
	A_1^{(1)} &= \theta_3E_{0,0}+\sum_{j=1}^{n}E_{0,j} + \sum_{i=1}^{n}\{\theta_3(\kappa_i+\rho_i)E_{i,0}+\sum_{j=1}^{n}(\kappa_i+\rho_i)E_{i,j}\},\\
	A_0^{(1)} &= (\kappa_0+\rho_1)E_{0,0}-\sum_{j=1}^{n}E_{0,j}-\sum_{j=1}^{n}E_{0,n+j} + \sum_{i=1}^{n}\{(\rho_1-\rho_i)E_{i,i}-\sum_{j=i+1}^{n}(\kappa_i+\rho_i)E_{i,j}\}\\
	&\quad + \sum_{i=1}^{n}\{(\rho_1-\rho_i)E_{n+i,n+i}-\sum_{j=i+1}^{n}(\kappa_i+\rho_i)E_{n+i,n+j}\},\\
	A_t &= \theta_2E_{0,0}+\sum_{j=1}^{n}E_{0,n+j} + \sum_{i=1}^{n}\{\theta_2(\kappa_i+\rho_i)E_{n+i,0}+\sum_{j=1}^{n}(\kappa_i+\rho_i)E_{n+i,n+j}\},\\
	A_1^{(2)} &= \sum_{i=1}^{n}\{\theta_2E_{i,i}-\theta_3E_{i,n+i}\} + \sum_{i=1}^{n}\{-\theta_2E_{n+i,i}+\theta_3E_{n+i,n+i}\},\\
	A_0^{(2)} &= \sum_{i=1}^{n}\{-\theta_2(\kappa_i+\rho_i)E_{n+i,0}+\theta_2E_{n+i,i} -\sum_{j=1}^{i-1}(\kappa_i+\rho_i)E_{n+i,n+j}+(\theta_2+\kappa_0-\kappa_i)E_{n+i,n+i}\},
\end{align*}
where $E_{i,j}$ stands for the $(2n+1)\times(2n+1)$ matrix with $1$ in the $(i,j)$-th entry and zeros elsewhere.
\end{thm}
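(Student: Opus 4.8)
The plan is to verify the theorem by direct computation on the invariant locus cut out by the constraints \eqref{Eq:Sch_Tsu_PS_F2}, so the first task is to confirm that this locus is genuinely preserved by both Hamiltonian flows. I would compute $\partial/\partial t_a$ of each constraint function $q_ip_i-\kappa_i-\rho_i$ and $p'_i$ from Hamilton's equations and check that these derivatives vanish once the constraints and the parameter relation $\theta_1+\sum_j(\kappa_j+\rho_j)=0$ are imposed. Because $H_2$ is obtained from $H_1$ through the symmetry $t_1\leftrightarrow t_2$, $q_i\leftrightarrow q'_i$, $p_i\leftrightarrow p'_i$, $\theta_1\leftrightarrow\theta_2$, it suffices to carry out the $\partial/\partial t_1$ computation in full and then transport the result by this symmetry.

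Once invariance is established, the only surviving dynamical variables are $q_i$ and $q'_i$, with $p_i=(\kappa_i+\rho_i)/q_i$ and $p'_i=0$. I would substitute these into Hamilton's equations to obtain the reduced flows $\partial q_i/\partial t_a$, $\partial q'_i/\partial t_a$ and hence $\partial p_i/\partial t_a$; these are the only inputs needed for the differentiation step. In parallel I would verify that the $1$-form defining $d\log w_0$ is closed on the locus, which is precisely the integrability condition guaranteeing that $w_0$ exists as a function of $(t_1,t_2)$; this closedness is itself a consequence of the reduced flows.

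With these ingredients in hand, the computation of $d\mathbf{w}$ is mechanical. Writing $dw_0=w_0\,d\log w_0$, one has $dw_i=(dp_i)w_0+w_i\,d\log w_0$ and $dw'_i=-(dq'_i)p_iw_0-q'_i(dp_i)w_0+w'_i\,d\log w_0$, so that every entry of $d\mathbf{w}$ is the prescribed multiple of $\mathbf{w}$ coming from $d\log w_0$ plus contributions from $dp_i$ and $dq'_i$. Expanding the reduced flows and re-expressing $p_i$, $q'_ip_i$ and their products in terms of the components $w_0,w_j,w'_j$, I would then collect the coefficients of the five logarithmic forms $d\log(t_1-1)$, $d\log t_1$, $d\log(t_1-t_2)$, $d\log(t_2-1)$, $d\log t_2$ and read off the matrices $A_1^{(1)},A_0^{(1)},A_t,A_1^{(2)},A_0^{(2)}$.

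The main obstacle is the bookkeeping rather than any conceptual difficulty. The Hamiltonian $H_1^{(0)}$ contains the double sum $\sum_{i}\sum_{j>i}q_ip_j\{(q_i-q_j)p_i+(q'_i-q'_j)p'_i-\kappa_i-\rho_i\}$, and even after the simplifications $p'_i=0$, $q_ip_i=\kappa_i+\rho_i$ it produces the nontrivial strictly-triangular terms $-\sum_{j=i+1}^n(\kappa_i+\rho_i)E_{i,j}$ and $-\sum_{j=i+1}^n(\kappa_i+\rho_i)E_{n+i,n+j}$, together with the analogous contributions to $A_0^{(2)}$, that appear in the matrices; matching these index ranges correctly, while keeping track of which parts of the reduced flows feed the $t_1$-type versus the $t_2$-type logarithmic forms, is where care is required. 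As a final consistency check I would confirm flatness, $dA+A\wedge A=0$, although this is automatic since $\mathbf{w}$ is built from an honest solution of $\mathcal{H}_{n+1,2}$.
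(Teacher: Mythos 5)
Your outline is, in substance, the paper's own proof: the paper says nothing beyond ``we obtain the following theorem by a direct computation,'' and your plan --- invariance of the locus \eqref{Eq:Sch_Tsu_PS_F2} under both Hamiltonian flows, reduction to the variables $q_i,q'_i$ with $p_i=(\kappa_i+\rho_i)/q_i$, closedness of the $1$-form defining $d\log w_0$, then expanding $d\mathbf{w}$ and collecting the coefficients of the five logarithmic forms --- is exactly that computation, correctly organized.

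One step, however, is flawed as stated: the shortcut ``carry out the $\partial/\partial t_1$ computation in full and then transport the result by the symmetry $t_1\leftrightarrow t_2$, $q_i\leftrightarrow q'_i$, $p_i\leftrightarrow p'_i$, $\theta_1\leftrightarrow\theta_2$.'' The specialization \eqref{Eq:Sch_Tsu_PS_F2} is \emph{not} invariant under this symmetry: it imposes $q_ip_i=\kappa_i+\rho_i$ on one canonical pair but $p'_i=0$ on the other, and the parameter relation involves $\theta_1$ only, so the symmetry maps the locus to a different locus. Consequently neither the on-locus vanishing of the $t_2$-derivatives of the constraints nor the reduced $t_2$-flow is the mirror image of the corresponding $t_1$-statement; this asymmetry is visible in the theorem itself, where $A_1^{(2)}$ and $A_0^{(2)}$ have a completely different shape from $A_1^{(1)}$, $A_0^{(1)}$, $A_t$ (for instance $A_0^{(2)}$ has nonzero entries only in rows $n+1,\ldots,2n$, while $A_0^{(1)}$ populates row $0$ as well). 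What the symmetry legitimately gives you are the \emph{unrestricted} Hamilton equations: since $H_2=\sigma(H_1)$ and $\sigma$ is canonical, $\partial F/\partial t_2=\sigma\bigl(\partial(\sigma F)/\partial t_1\bigr)$ as polynomial identities in all phase variables, for every coordinate function $F$. You may therefore transport the raw $t_1$-derivative formulas of \emph{all} variables by $\sigma$ and only afterwards impose \eqref{Eq:Sch_Tsu_PS_F2}; transporting the already-restricted results, or the reduced flows, gives wrong answers. With that correction the $t_2$-half of your verification still requires its own evaluation on the locus, and the rest of your argument goes through as planned.
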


The Riemann scheme of system \eqref{Eq:Sch_Tsu_Pfaff} is given by
\[
	\left\{\begin{array}{ccccccc}
		t_1=1 & t_1=0 & t_1=\infty & t_1=t_2 & t_2=1 & t_2=0 & t_2=\infty \\
		{[0]_{2n}} & {[0]_2} & {[-\kappa_1-\rho_1]_2} & {[0]_{2n}} & {[0]_{n+1}} & {[0]_{n+1}} & {[-\theta_2]_{n+1}} \\
		\theta_3' & {[\rho_1-\rho_2]_2} & \vdots & \theta_2' & {[\theta_2+\theta_3]_{n}} & \kappa_1' & \rho_1' \\
		& \vdots & {[-\kappa_n-\rho_1]_2} & & & \vdots & \vdots \\
		& {[\rho_1-\rho_n]_2} & \rho_1' & & & \kappa_n' & \rho_n' \\
		& \kappa_0+\rho_1 & & & & & \\
	\end{array}\right\},
\]
where
\begin{align*}
	&\theta_2' = \theta_2 + \sum_{j=1}^{n}(\kappa_j+\rho_j),\quad
	\theta_3' = \theta_3 + \sum_{j=1}^{n}(\kappa_j+\rho_j),\\
	&\kappa_i' = \theta_2+\kappa_0-\kappa_i,\quad
	\rho_i' = -\theta_2-\theta_3-\kappa_0-\rho_i\quad (i=1,\ldots,n),
\end{align*}
and the symbol ${[a]_{k}}$ means that the multiplicity of the eigenvalue $a$ is $k$.
Hence we can find that system \eqref{Eq:Sch_Tsu_Pfaff} is the Pfaff one $J_{2n+1}/I_{2n+1}$.

A solution of system \eqref{Eq:Sch_Tsu_Pfaff} can be described in terms of the degeneration of the function $F_2^{(n)}$ given in Section \ref{Sec:Gen_Appell}.

\begin{thm}\label{Thm:GA_deg_Pfaff}
Let $z$ be a solution of system \eqref{Eq:Gen_Appell_2n+1}.
We also set
\begin{equation}\begin{split}\label{Eq:GA_deg_Pfaff}
	w_0 &= -\{\prod_{j=1}^{n}(\delta_1+b_j)\}(\delta_2+c'-1)z,\\
	w_1 &= -b_1(b'-c'+1)\{\prod_{j=2}^{n}(\delta_1+b_j)\}(\delta_1+c_1-1)z,\\
	w_i &= -(b_i-c_i+1)(b'-c'+1)\{\prod_{j=i+1}^{n}(\delta_1+b_j)\}\{\prod_{j=1}^{i-1}(\delta_1+c_j-1)\}\delta_1z\quad (i=2,\ldots,n),\\
	w'_1 &= b_1\{\prod_{j=2}^{n}(\delta_1+b_j)\}(\delta_1+c_1-1)(\delta_2+b')z,\\
	w'_i &= (b_i-c_i+1)\{\prod_{j=i+1}^{n}(\delta_1+b_j)\}\{\prod_{j=1}^{i-1}(\delta_1+c_j-1)\}\delta_1(\delta_2+b')z\quad (i=2,\ldots,n).
\end{split}\end{equation}
Then a vector of dependent variables $\mathbf{w}={}^t[w_0,w_1,\ldots w_n,w'_1,\ldots,w'_n]$ satisfies system \eqref{Eq:Sch_Tsu_Pfaff} under a change of parameters
\begin{align*}
	&b_i = -\kappa_i-\rho_1\quad (i=1,\ldots,n),\quad
	b' = -\theta_2,\\
	&c_1 = -\kappa_0-\rho_1+1,\quad
	c_i = -\rho_1+\rho_i+1\quad (i=2,\ldots,n),\quad
	c' = 1-\theta_2-\theta_3.
\end{align*}
\end{thm}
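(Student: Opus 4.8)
The plan is to verify the single matrix relation \eqref{Eq:Sch_Tsu_Pfaff} by splitting it into its $dt_1$- and $dt_2$-parts and checking the resulting identities component by component. Writing out the logarithmic forms, \eqref{Eq:Sch_Tsu_Pfaff} is equivalent to the pair of first order systems
\begin{align*}
	\partial_{t_1}\mathbf{w} &= \Bigl\{\tfrac{A_1^{(1)}}{t_1-1}+\tfrac{A_0^{(1)}}{t_1}+\tfrac{A_t}{t_1-t_2}\Bigr\}\mathbf{w},\\
	\partial_{t_2}\mathbf{w} &= \Bigl\{{-\tfrac{A_t}{t_1-t_2}}+\tfrac{A_1^{(2)}}{t_2-1}+\tfrac{A_0^{(2)}}{t_2}\Bigr\}\mathbf{w}.
\end{align*}
I would substitute the stated change of parameters once and for all, and regard each entry of $\mathbf{w}$ in \eqref{Eq:GA_deg_Pfaff} as a differential operator in $\delta_1,\delta_2$ applied to the fixed solution $z$ of \eqref{Eq:Gen_Appell_2n+1}. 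The content of the theorem is then that the $\mathbb{C}(t_1,t_2)$-span of $w_0,\dots,w_n,w'_1,\dots,w'_n$ is stable under $\partial_{t_1}$ and $\partial_{t_2}$, with closure constants given precisely by the five matrices.

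The computation rests on a small toolbox. Since $\delta_1=t_1\partial_{t_1}$ and $\delta_2=(t_2-1)\partial_{t_2}$ involve disjoint variables, $[\delta_1,\delta_2]=0$, and for any polynomial $P$ one has $P(\delta_1)\,t_1=t_1\,P(\delta_1+1)$ and $P(\delta_2)\,(t_2-1)=(t_2-1)\,P(\delta_2+1)$, which let me carry the factors $t_1$ and $1-t_2$ produced by differentiation past the shift operators. The two equations \eqref{Eq:Gen_Appell_2n+1} serve as closure relations: the first expresses the order $n+1$ string $\delta_1\prod_{i=1}^n(\delta_1+c_i-1)z$ through $t_1(\delta_1+\delta_2+c_1+c'-2)\prod_{i=1}^n(\delta_1+b_i)z$, and the second expresses $\delta_2(\delta_2+c'-1)z$ through $(1-t_2)(\delta_1+\delta_2+c_1+c'-2)(\delta_2+b')z$. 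Finally the elementary identities $(\delta_1+b_i)-(\delta_1+c_i-1)=b_i-c_i+1$ and $(\delta_2+b')-(\delta_2+c'-1)=b'-c'+1$ account for the constants in \eqref{Eq:GA_deg_Pfaff} and for the passage between neighbouring components; in particular they give at once $w'_i=-(b'-c'+1)^{-1}(\delta_2+b')\,w_i$, and they let $\delta_2 w_0$ be rewritten through the $(\delta_2+c'-1)$-string.

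I would handle the $t_2$-direction first, since it is governed only by the order two equation. For $i=1,\dots,n$ the operator defining $w_i$ is free of $\delta_2$, so $\partial_{t_2}w_i=(t_2-1)^{-1}\delta_2 w_i$, and the two identities above collapse this to $\partial_{t_2}w_i=(t_2-1)^{-1}\{-(b'-c'+1)w'_i-b'w_i\}$, which with $b'-c'+1=\theta_3$ and $-b'=\theta_2$ is exactly the $i$-th row of $A_1^{(2)}/(t_2-1)$. For the components $w_0,w'_i$ carrying one power of $\delta_2$, applying $\partial_{t_2}$ produces $\delta_2^2$-type terms that are removed by the second relation of \eqref{Eq:Gen_Appell_2n+1}; after partial-fractioning in $t_2,\,t_2-1,\,t_1-t_2$ one matches the rows of $-A_t$, $A_1^{(2)}$ and $A_0^{(2)}$. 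The $t_1$-direction is then treated the same way, applying $\partial_{t_1}=t_1^{-1}\delta_1$ to the long $\delta_1$-products and using the telescoping identity repeatedly to step between $w_i$ and $w_{i+1}$; this stepping is what generates the finite sums $\sum_{j=i+1}^n$ occurring in $A_0^{(1)}$ and $A_0^{(2)}$.

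The main obstacle I expect is closing the $t_1$-system at the top of the chain, where $\partial_{t_1}$ produces the order $n+1$ string that is not among the chosen basis operators and must be eliminated through the first equation of \eqref{Eq:Gen_Appell_2n+1}. The leading $\delta_1^{n+1}$-coefficient of that equation is $t_1-1$, so the elimination divides by $t_1-1$ and is exactly what creates the pole along $t_1=1$ encoded by $A_1^{(1)}$; simultaneously the coupling term $(\delta_1+\delta_2+c_1+c'-2)$ reintroduces $\delta_2$, which must be pushed back onto the $w'_i$ through the second closure relation and, together with the factor $t_1$, produces the residue along $t_1=t_2$ encoded by $A_t$. The delicate point is to check that, after this elimination and the redistribution of $t_1$ via the intertwining rule, the resulting combination lands in the span of $\mathbf{w}$ with coefficients matching $A_1^{(1)},A_0^{(1)},A_t$, and that the constants produced agree with $b_i=-\kappa_i-\rho_1$, $b'=-\theta_2$, $c_1=-\kappa_0-\rho_1+1$, $c_i=-\rho_1+\rho_i+1$, $c'=1-\theta_2-\theta_3$. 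Everything else is finite bookkeeping organized by the telescoping identity.
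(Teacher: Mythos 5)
Your overall architecture coincides with the paper's: split \eqref{Eq:Sch_Tsu_Pfaff} into its $dt_1$- and $dt_2$-parts, regard each entry of $\mathbf{w}$ in \eqref{Eq:GA_deg_Pfaff} as a polynomial in $\delta_1,\delta_2$ applied to $z$, and verify closure row by row using (i) commutativity and the shift rules, (ii) the ladder identities between neighbouring components (your ``telescoping''), in particular $(\delta_2+b')w_i=-(b'-c'+1)w'_i$, and (iii) the two equations of \eqref{Eq:Gen_Appell_2n+1} to eliminate the strings $\delta_1\{\prod_{i=1}^{n}(\delta_1+c_i-1)\}z$ and $\delta_2(\delta_2+c'-1)z$. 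Your observations that the rows $\delta_2w_i=-b'w_i-(b'-c'+1)w'_i$ are immediate from the definitions, and that the pole at $t_1=1$ comes from the leading coefficient $t_1-1$ of the first equation, are both exactly what the paper does.

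The genuine gap is your treatment of the $t_2$-direction. You claim it ``is governed only by the order two equation,'' and that for the components $w_0,w'_i$ one removes the $\delta_2^2$-terms by the second relation of \eqref{Eq:Gen_Appell_2n+1} and then matches the rows of $-A_t$, $A_1^{(2)}$, $A_0^{(2)}$ by partial fractions. This cannot work as stated. The second equation of \eqref{Eq:Gen_Appell_2n+1}, as well as all the definitional and ladder identities, have coefficients lying in $\mathbb{C}(t_2)$ (in fact constants or $1-t_2$); every reduction they permit therefore keeps all coefficients free of $t_1$. But the target rows for $\delta_2w_0$ and $\delta_2w'_i$ contain the genuinely $t_1$-dependent factor $(t_2-1)/(t_2-t_1)$ multiplying $b'w_0-\sum_{j=1}^{n}w'_j$, and no partial-fraction manipulation of functions of $t_2$ alone can create a pole along $t_1=t_2$. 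The $t_1$-dependence can only enter through the first, $(n+1)$-st order equation of \eqref{Eq:Gen_Appell_2n+1} --- equivalently, through the already-established $\delta_1$-rows. This is precisely why the paper runs the argument in the opposite order: it first proves the $\delta_1$-equations for $w_1$ and $w'_1$ (using \emph{both} equations of \eqref{Eq:Gen_Appell_2n+1} together with the telescoping identity), propagates them to all $i$ by the ladder relations, and only then obtains $\delta_2w_0$ and $\delta_2w'_i$ by substituting those $\delta_1$-rows into identities that do follow from the definitions alone, such as $t_2\delta_2w_0=-(t_2-1)\{(\delta_1+c_1-1+b')w_0+\tfrac{1}{b_1}(\delta_1+b_1)w_1\}$. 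Your toolbox contains everything needed to repair this, but the ``$t_2$-first, second-equation-only'' plan must be abandoned: the $\delta_2$-equations for $w_0$ and $w'_i$ are downstream of the $\delta_1$-equations, not independent of them.
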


We will prove this theorem in Section \ref{Sec:GA_deg_Pfaff_Proof}.

\begin{rem}\label{Rem:Rel_F2_F1}
If, in the system $\mathcal{H}_{n+1,2}$, we assume that
\begin{equation}\label{Eq:Sch_Tsu_PS_F1}
	p_i=p'_i=0\quad (i=1,\ldots,n),\quad
	\theta_3+\sum_{j=1}^{n}(\kappa_j+\rho_j)=0,
\end{equation}
then we obtain the Pfaff system $I_{2n+1}/I_{2n+1}$ given in \cite{T1,T3}.
As a matter of fact, a relationship between two specializations \eqref{Eq:Sch_Tsu_PS_F2} and \eqref{Eq:Sch_Tsu_PS_F1}, or equivalently two Pfaff systems $J_{2n+1}/I_{2n+1}$ and $I_{2n+1}/I_{2n+1}$, is derived from the birational transformation
\begin{align*}
	&q_i \to \frac{1}{q_i},\quad
	p_i \to -q_i(q_ip_i+q'_ip'_i-\kappa_i-\rho_i),\quad
	q'_i \to -\frac{q'_i}{q_i},\quad
	p'_i \to -q_ip'_i,\\
	&\theta_1 \to \theta_3,\quad
	\theta_3 \to \theta_1,\quad
	t_1 \to \frac{1}{t_1},\quad
	t_2 \to \frac{t_2}{t_1}\quad (i=1,\ldots,n),
\end{align*}
under which the system $\mathcal{H}_{n+1,2}$ is invariant; see \cite{T2}.
\end{rem}

\section{Proof of Theorem \ref{Thm:GA_deg_Pfaff}}\label{Sec:GA_deg_Pfaff_Proof}

In this section we show that the vector of dependent variables $\mathbf{w}={}^t[w_0,w_1,\ldots w_n,w'_1,\ldots,w'_n]$ defined by \eqref{Eq:GA_deg_Pfaff} satisfies partial differential equations
\begin{align}
	\delta_1w_0 &= \frac{t_1}{t_1-t_2}(-b'w_0+\sum_{j=1}^{n}w'_j) + \frac{t_1}{t_1-1}\{(b'-c'+1)w_0+\sum_{j=1}^{n}w_j\} - \widetilde{c_0}w_0 - \sum_{j=1}^{n}(w_j+w'_j),\label{Eq:GA_deg_Pfaff_Proof_w01}\\
	\delta_1w_i &= -\frac{t_1}{t_1-1}\widetilde{b_i}\{(b'-c'+1)w_0+\sum_{j=1}^{n}w_j\} - \widetilde{c_i}w_i + \sum_{j=i+1}^{n}\widetilde{b_i}w_j,\label{Eq:GA_deg_Pfaff_Proof_wi1}\\
	\delta_1w'_i &= -\frac{t_1}{t_1-t_2}\widetilde{b_i}(-b'w_0+\sum_{j=1}^{n}w'_j) - \widetilde{c_i}w'_i + \sum_{j=i+1}^{n}\widetilde{b_i}w'_j,\label{Eq:GA_deg_Pfaff_Proof_w'i1}\\
	\delta_2w_0 &= -\frac{t_2-1}{t_2-t_1}(b'w_0-\sum_{j=1}^{n}w'_j),\label{Eq:GA_deg_Pfaff_Proof_w02}\\
	\delta_2w_i &= -b'w_i - (b'-c'+1)w'_i,\label{Eq:GA_deg_Pfaff_Proof_wi2}\\
	\delta_2w'_i &= \frac{t_2-1}{t_2-t_1}\widetilde{b_i}(b'w_0-\sum_{j=1}^{n}w'_j) + \frac{t_2-1}{t_2}\{-\widetilde{b_i}b'w_0-b'w_i+\sum_{j=1}^{i-1}\widetilde{b_i}w'_j+(b_i-c_1-b'+1)w'_i\}\nonumber\\
	&\quad + b'w_i + (b'-c'+1)w'_i,\label{Eq:GA_deg_Pfaff_Proof_w'i2}
\end{align}
for $i=1,\ldots,n$, where
\[
	\widetilde{c_0} = c_1-1,\quad
	\widetilde{b_1} = b_1,\quad
	\widetilde{c_1} = 0,\quad
	\widetilde{b_j} = b_j-c_j+1,\quad
	\widetilde{c_j} = c_j-1\quad (j=2,\ldots,n).
\]
Equation \eqref{Eq:GA_deg_Pfaff_Proof_wi2} follows from definition \eqref{Eq:GA_deg_Pfaff} immediately.
We will prove the other five equations.

We first show that
\begin{equation}\label{Eq:GA_deg_Pfaff_Proof_w11}
	\delta_1w_1 = -\frac{t_1}{t_1-1}b_1\{(b'-c'+1)w_0+\sum_{j=1}^{n}w_j\} + \sum_{j=2}^{n}b_1w_j.
\end{equation}
Substituting system \eqref{Eq:Gen_Appell_2n+1} to definition \eqref{Eq:GA_deg_Pfaff}, we have
\begin{equation}\begin{split}\label{Eq:GA_deg_Pfaff_Proof_w11_Proof_1}
	(\delta_1+b_1)w_1 &= -b_1(b'-c'+1)\{\prod_{j=1}^{n}(\delta_1+b_j)\}(\delta_1+\widetilde{c_0})z\\
	&= b_1(b'-c'+1)\{\prod_{j=1}^{n}(\delta_1+b_j)\}(\delta_2+c'-1) - \frac{1}{t_1}b_1(b'-c'+1)\{\prod_{j=0}^{n}(\delta_1+\widetilde{c_j})\}z\\
	&= -b_1(b'-c'+1)w_0 - \frac{1}{t_1}b_1(b'-c'+1)\{\prod_{j=0}^{n}(\delta_1+\widetilde{c_j})\}z.
\end{split}\end{equation}
On the other hand, we have
\begin{align*}
	&(b'-c'+1)(\delta_1+\widetilde{c_0})\{\prod_{j=1}^{n}(\delta_1+\widetilde{c_j})-\prod_{j=1}^{n}(\delta_1+b_j)\}z\\
	&= w_1 + (b'-c'+1)(\delta_1+\widetilde{c_0})\delta_1\{\prod_{j=2}^{n}(\delta_1+\widetilde{c_j})-\prod_{j=2}^{n}(\delta_1+b_j)\}z\\
	&= w_1 + w_2 + (b'-c'+1)(\delta_1+\widetilde{c_0})\delta_1(\delta_1+\widetilde{c_2})\{\prod_{j=3}^{n}(\delta_1+\widetilde{c_j})-\prod_{j=3}^{n}(\delta_1+b_j)\}z\\
	&= w_1 + w_2 + \ldots + w_{n-1} + (b'-c'+1)\prod_{j=0}^{n-1}(\delta_1+\widetilde{c_j})\{(\delta_1+\widetilde{c_n})-(\delta_1+b_n)\}z\\
	&= \sum_{j=1}^{n}w_j.
\end{align*}
It implies
\begin{equation}\label{Eq:GA_deg_Pfaff_Proof_w11_Proof_2}
	(b'-c'+1)w_0 + \sum_{j=1}^{n}w_j = \frac{t_1-1}{t_1}(b'-c'+1)\prod_{j=0}^{n}(\delta_1+\widetilde{c_j})z.
\end{equation}
Note that we use
\[
	w_0 = \{(\delta_1+\widetilde{c_0})\prod_{j=1}^{n}(\delta_1+b_j)-\frac{1}{t_1}(\delta_1+\widetilde{c_0})\prod_{j=1}^{n}(\delta_1+\widetilde{c_j})\}z,
\]
in the above calculation.
Combining equations \eqref{Eq:GA_deg_Pfaff_Proof_w11_Proof_1} and \eqref{Eq:GA_deg_Pfaff_Proof_w11_Proof_2}, we obtain \eqref{Eq:GA_deg_Pfaff_Proof_w11}.

We next show that
\begin{equation}\label{Eq:GA_deg_Pfaff_Proof_w'11}
	\delta_1w'_1 = -\frac{t_1}{t_1-t_2}b_1(-b'w_0+\sum_{j=1}^{n}w'_j) + \sum_{j=2}^{n}b_1w'_j.
\end{equation}
An equation
\[
	-(b'-c'+1)w'_j = (\delta_2+b')w_j,
\]
follows from definition \eqref{Eq:GA_deg_Pfaff}.
It implies
\begin{align*}
	\sum_{j=1}^{n}w'_j &= -(\delta_2+b')(\delta_1+\widetilde{c_0})\{\prod_{j=1}^{n}(\delta_1+\widetilde{c_j})-\prod_{j=1}^{n}(\delta_1+b_j)\}z\\
	&= -(\delta_2+b')\{t_1(\delta_1+\delta_2+\widetilde{c_0}+c'-1)-(\delta_1+\widetilde{c_0})\}\{\prod_{j=1}^{n}(\delta_1+b_j)\}z.
\end{align*}
Hence we obtain
\begin{align*}
	&b't_1w_0 - t_2\sum_{j=1}^{n}w'_j\\
	&= [-b't_1(\delta_2+c'-1)+t_2(\delta_2+b')\{t_1(\delta_1+\delta_2+\widetilde{c_0}+c'-1)-(\delta_1+\widetilde{c_0})\}]\{\prod_{j=1}^{n}(\delta_1+b_j)\}z\\
	&= (t_1-t_2)(\delta_2+b')(\delta_1+\widetilde{c_0})\{\prod_{j=1}^{n}(\delta_1+b_j)\}z\\
	&= \frac{1}{b_1}(t_1-t_2)(\delta_1+b_1)w'_1,
\end{align*}
which is rewritten to equation \eqref{Eq:GA_deg_Pfaff_Proof_w'11}.

We can prove equations \eqref{Eq:GA_deg_Pfaff_Proof_wi1} and \eqref{Eq:GA_deg_Pfaff_Proof_w'i1} by substituting \eqref{Eq:GA_deg_Pfaff_Proof_w11} and \eqref{Eq:GA_deg_Pfaff_Proof_w'11} to
\[
	\widetilde{b_i}(\delta_1+\widetilde{c_{i-1}})w_{i-1} = \widetilde{b_{i-1}}(\delta_1+b_i)w_i,\quad
	\widetilde{b_i}(\delta_1+\widetilde{c_{i-1}})w'_{i-1} = \widetilde{b_{i-1}}(\delta_1+b_i)w'_i\quad (i=2,\ldots,n),
\]
which follows from definition \eqref{Eq:GA_deg_Pfaff}.

The rest three equations can be proved as follows.
We obtain equation \eqref{Eq:GA_deg_Pfaff_Proof_w01} by substituting \eqref{Eq:GA_deg_Pfaff_Proof_wi1} and \eqref{Eq:GA_deg_Pfaff_Proof_w'i1} to
\[
	b_1(\delta_1+\widetilde{c_0})w_0 = -(\delta_1+b_1)(w_1+w'_1).
\]
Then equation \eqref{Eq:GA_deg_Pfaff_Proof_w02} is given by substituting \eqref{Eq:GA_deg_Pfaff_Proof_w01} and \eqref{Eq:GA_deg_Pfaff_Proof_wi1} to
\[
	t_2\delta_2w_0 = -(t_2-1)\{(\delta_1+\widetilde{c_0}+b')w_0+\frac{1}{b_1}(\delta_1+b_1)w_1\}.
\]
We also obtain equation \eqref{Eq:GA_deg_Pfaff_Proof_w'i2} by substituting \eqref{Eq:GA_deg_Pfaff_Proof_w'i1} to
\[
	t_2\delta_2w'_i = -\delta_2w_i - (t_2-1)(\delta_1+\widetilde{c_0}-c'-1)w'_i\quad (i=1,\ldots,n).
\]
Note that three equations above follow from definition \eqref{Eq:GA_deg_Pfaff}.

\section{A degeneration of $\mathcal{H}_{n+1,2}$ and its particular solution in terms of $J_{2n}/I_{2n}$}\label{Sec:Sch_HGE_deg}

Let us substitute
\begin{equation}\label{Eq:Sch_Tsu_DS}
	q'_1 = 1 - q_1,\quad
	p'_1 = 0,\quad
	\kappa_0 = \kappa_1,
\end{equation}
to the system $\mathcal{H}_{n+1,2}$.
Then we obtain a Hamiltonian system of $(4n-2)$-nd order in two variables; denote it by $\mathcal{H}^{*}_{n+1,2}$.
It is derived from the isomonodromy deformation of the Fuchsian system of type $\{(n,1),(n,1),(n,1),(2,1,\ldots,1),(1,\ldots,1)\}$.
A degeneration of its hypergeometric solution occurs as follows.
\[\begin{array}{ccc}
	\mathcal{H}_{n+1,2} & \xrightarrow{\text{\small\eqref{Eq:Sch_Tsu_DS}}} & \mathcal{H}^{*}_{n+1,2} \\[8pt]
	\eqref{Eq:Sch_Tsu_PS_F2}\downarrow\qquad & & \qquad\downarrow\eqref{Eq:Sch_Tsu_deg_PS} \\
	\text{Pfaff system \eqref{Eq:Sch_Tsu_Pfaff}} & \xrightarrow{\text{\small\eqref{Eq:Sch_Tsu_Pfaff_DS}}} & \text{Pfaff system \eqref{Eq:Sch_Tsu_Pfaff_deg}} \\[8pt]
	\eqref{Eq:GA_deg_Pfaff}\uparrow\qquad & & \qquad\uparrow\eqref{Eq:GA_Pfaff} \\
	\text{HG system \eqref{Eq:Gen_Appell_2n+1}} & \xrightarrow{\text{\small Sec. \ref{Sec:Gen_Appell}}} & \text{HG system \eqref{Eq:Gen_Appell_2n}}
\end{array}\]
We state its detail in the following.

In system \eqref{Eq:Sch_Tsu_Pfaff} we assume that
\begin{equation}\label{Eq:Sch_Tsu_Pfaff_DS}
	w'_1 = (\kappa_1+\rho_1)w_0 - w_1,\quad
	\kappa_0 = \kappa_1.
\end{equation}
Then we obtain a linear Pfaff system for a vector of functions $\mathbf{w}={}^t[w_0,w_1,\ldots w_n,w'_2,\ldots,w'_n]$
\begin{equation}\label{Eq:Sch_Tsu_Pfaff_deg}
	d\mathbf{w} = \{A_1^{(1)}d\log(t_1-1)+A_0^{(1)}d\log t_1+A_td\log(t_1-t_2)+A_1^{(2)}d\log(t_2-1)+A_0^{(2)}d\log t_2\}\mathbf{w},
\end{equation}
with matrices
\begin{align*}
	A_1^{(1)} &= \theta_3E_{0,0}+\sum_{j=1}^{n}E_{0,j} + \sum_{i=1}^{n}\{\theta_3(\kappa_i+\rho_i)E_{i,0}+\sum_{j=1}^{n}(\kappa_i+\rho_i)E_{i,j}\},\\
	A_0^{(1)} &= -\sum_{j=2}^{n}E_{0,j}-\sum_{j=1}^{n-1}E_{0,n+j} + \sum_{i=1}^{n}\{(\rho_1-\rho_i)E_{i,i}-\sum_{j=i+1}^{n}(\kappa_i+\rho_i)E_{i,j}\}\\
	&\quad + \sum_{i=1}^{n-1}\{(\rho_1-\rho_{i+1})E_{n+i,n+i}-\sum_{j=i+1}^{n-1}(\kappa_{i+1}+\rho_{i+1})E_{n+i,n+j}\},\\
	A_t &= (\theta_2+\kappa_1+\rho_1)E_{0,0}-E_{0,1}+\sum_{j=1}^{n-1}E_{0,n+j}\\
	&\quad + \sum_{i=1}^{n-1}\{(\theta_2+\kappa_1+\rho_1)(\kappa_{i+1}+\rho_{i+1})E_{n+i,0}-(\kappa_{i+1}+\rho_{i+1})E_{n+i,1}+\sum_{j=1}^{n-1}(\kappa_{i+1}+\rho_{i+1})E_{n+i,n+j}\},\\
	A_1^{(2)} &= -\theta_3(\kappa_1+\rho_1)E_{1,0} + (\theta_2+\theta_3)E_{1,1} + \sum_{i=1}^{n-1}\{\theta_2E_{i+1,i+1}-\theta_3E_{i+1,n+i}\} + \sum_{i=1}^{n-1}\{-\theta_2E_{n+i,i+1}+\theta_3E_{n+i,n+i}\},\\
	A_0^{(2)} &= \sum_{i=1}^{n-1}\{-(\theta_2+\kappa_1+\rho_1)(\kappa_{i+1}+\rho_{i+1})E_{n+i,0}+(\kappa_{i+1}+\rho_{i+1})E_{n+i.1}+\theta_2E_{n+i,i+1}\}\\
	&\quad + \sum_{i=1}^{n-1}\{-\sum_{j=1}^{i-1}(\kappa_{i+1}+\rho_{i+1})E_{n+i,n+j}+(\theta_2+\kappa_1-\kappa_{i+1})E_{n+i,n+i}\},
\end{align*}
where $E_{i,j}$ stands for the $2n\times2n$ matrix with $1$ in the $(i,j)$-th entry and zeros elsewhere.
The Riemann scheme of system \eqref{Eq:Sch_Tsu_Pfaff_deg} is given by
\[
	\left\{\begin{array}{ccccccc}
		t_1=1 & t_1=0 & t_1=\infty & t_1=t_2 & t_2=1 & t_2=0 & t_2=\infty \\
		{[0]_{2n-1}} & {[0]_2} & {[-\kappa_2-\rho_1]_2} & {[0]_{2n-1}} & {[0]_n} & {[0]_{n+1}} & {[-\theta_2]_n} \\
		\theta_3' & {[\rho_1-\rho_2]_2} & \vdots & \theta_2' & {[\theta_2+\theta_3]_{n}} & \kappa_2' & \rho_1' \\
		& \vdots & {[-\kappa_n-\rho_1]_2} & & & \vdots & \vdots \\
		& {[\rho_1-\rho_n]_2} & -\kappa_1-\rho_1 & & & \kappa_n' & \rho_n' \\
		& & \rho_1' & & & & \\
	\end{array}\right\},
\]
where
\begin{align*}
	&\theta_2' = \theta_2 + \sum_{j=1}^{n}(\kappa_j+\rho_j),\quad
	\theta_3' = \theta_3 + \sum_{j=1}^{n}(\kappa_j+\rho_j),\quad
	\kappa_i' = \theta_2+\kappa_1-\kappa_i\quad (i=2,\ldots,n),\\
	&\rho_i' = -\theta_2-\theta_3-\kappa_1-\rho_i\quad (i=1,\ldots,n),
\end{align*}
Hence we can find that system \eqref{Eq:Sch_Tsu_Pfaff_deg} is the Pfaff one $J_{2n}/I_{2n}$.
Their spectral types are defined as follows.
\[\begin{array}{rl}
	I_{2n}: & \{(2n-1,1),(n,n),(n+1,1,\ldots,1),(n,1,\ldots,1)\} \\
	J_{2n}: & \{(2n-1,1),(2n-1,1),(2,\ldots,2),(2,\ldots,2,1,1)\}
\end{array}\]

In the system $\mathcal{H}^{*}_{n+1,2}$ we assume that
\begin{equation}\begin{split}\label{Eq:Sch_Tsu_deg_PS}
	&q_1p_1-\kappa_1-\rho_1 = 0,\quad
	q_ip_i-\kappa_i-\rho_i = 0\quad (i=2,\ldots,n),\\
	&p'_i = 0\quad (i=2,\ldots,n),\quad
	\theta_1 + \sum_{j=1}^{n}(\kappa_j+\rho_j) = 0.
\end{split}\end{equation}
We also define a dependent variable $w_0$ by
\begin{align*}
	d\log w_0 &= (\sum_{j=1}^{n}p_j+\theta_3)d\log(t_1-1) + \{-q_1p_1+\sum_{j=2}^{n}(q'_j-1)p_j+\kappa_1+\rho_1\}d\log t_1\\
	&\quad + \{(q_1-1)p_1-\sum_{j=2}^{n}q'_jp_j+\theta_2\}d\log(t_1-t_2),
\end{align*}
and set
\begin{align*}
	w_i &= p_iw_0\quad (i=1,\ldots,n), \\
	w'_i &= -q'_ip_iw_0\quad (i=2,\ldots,n).
\end{align*}
Then a vector of dependent variables $\mathbf{w}={}^t[w_0,w_1,\ldots w_n,w'_1,\ldots,w'_n]$ satisfies system \eqref{Eq:Sch_Tsu_Pfaff_deg}.

A solution of system \eqref{Eq:Sch_Tsu_Pfaff_deg} can be described in terms of the function $F_2^{(n-1)}$.
Let $\tilde{z}$ be a solution of system \eqref{Eq:Gen_Appell_2n}.
We also set
\begin{equation}\begin{split}\label{Eq:GA_Pfaff}
	w_0 &= -\{\prod_{j=2}^{n}(\delta_1+b_j)\}(\delta_2+c'-1)\tilde{z},\\
	w_1 &= -(c_1-1)(b'-c'+1)\{\prod_{j=2}^{n}(\delta_1+b_j)\}\tilde{z},\\
	w_i &= -(b_i-c_i+1)(b'-c'+1)\{\prod_{j=i+1}^{n}(\delta_1+b_j)\}\{\prod_{j=2}^{i-1}(\delta_1+c_j-1)\}\delta_1\tilde{z}\quad (i=2,\ldots,n),\\
	w'_i &= (b_i-c_i+1)\{\prod_{j=i+1}^{n}(\delta_1+b_j)\}\{\prod_{j=2}^{i-1}(\delta_1+c_j-1)\}\delta_1(\delta_2+b')\tilde{z}\quad (i=2,\ldots,n).
\end{split}\end{equation}
Then a vector of dependent variables $\mathbf{w}={}^t[w_0,w_1,\ldots w_n,w'_2,\ldots,w'_n]$ satisfies system \eqref{Eq:Sch_Tsu_Pfaff_deg} under a change of parameters
\begin{align*}
	&b_i = -\kappa_i-\rho_1\quad (i=2,\ldots,n),\quad
	b' = -\theta_2,\\
	&c_1 = -\kappa_1-\rho_1+1,\quad
	c_i = -\rho_1+\rho_i+1\quad (i=2,\ldots,n),\quad
	c' = 1-\theta_2-\theta_3.
\end{align*}

\section{A relationship between $F_2^{(n)}$ and $F_{n+1,2}$}\label{Sec:Rel_F2_F1}

The hypergeometric function $F_{n+1,2}$ given in \cite{T1} is an extension of Appell's $F_1$.
It is defined by a formal power series
\[
	F_{n+1,2}\left[\begin{array}{c}\alpha_1,\ldots,\alpha_n,\beta_1,\beta_2\\\gamma_1,\ldots,\gamma_n\end{array};s_1,s_2\right] = \sum_{i,j=0}^{\infty}\frac{(\alpha_1)_{i+j}\ldots(\alpha_n)_{i+j}(\beta_1)_i(\beta_2)_j}{(\gamma_1)_{i+j}\ldots(\gamma_n)_{i+j}(1)_i(1)_j}s_1^is_2^j.
\]
which is convergent in $|s_1|<1,|s_2|<1$.
This series satisfies a system of linear partial differential equations
\begin{equation}\begin{split}\label{Eq:Gen_Appell_F1}
	&[s_1(D_1+\beta_1)\{\prod_{i=1}^{n}(D+\alpha_i)\}-D_1\{\prod_{i=1}^{n}(D+\gamma_i-1)\}]y = 0,\\
	&[s_2(D_2+\beta_2)\{\prod_{i=1}^{n}(D+\alpha_i)\}-D_2\{\prod_{i=1}^{n}(D+\gamma_i-1)\}]y = 0,\\
	&\{s_1(D_1+\beta_1)D_2-s_2(D_2+\beta_2)D_1\}y = 0,\\
\end{split}\end{equation}
where $D_1=s_1\partial/\partial s_1$, $D_2=s_2\partial/\partial s_2$ and $D=D_1+D_2$.
System \eqref{Eq:Gen_Appell_F1} can be rewritten to a Pfaff one $I_{2n+1}/I_{2n+1}$; we do not state its detail.
An integral expression of $F_{n+1,2}$ is given by
\begin{equation}\label{Eq:Gen_Appell_F1_int}
	F_{n+1,2}\left[\begin{array}{c}\alpha_1,\ldots,\alpha_n,\beta_1,\beta_2\\\gamma_1,\ldots,\gamma_n\end{array};s_1,s_2\right] = \{\prod_{k=1}^{n}\frac{\Gamma(\gamma_k)}{\Gamma(\alpha_k)\Gamma(\gamma_k-\alpha_k)}\}\int_{0<v_n<\ldots<v_1<1}V(s)dv_1\ldots dv_n,
\end{equation}
where
\begin{align*}
	V(s) &= (1-v_1)^{\gamma_1-\alpha_1-1}\{\prod_{k=1}^{n-1}v_k^{\alpha_k-\gamma_{k+1}}(v_k-v_{k+1})^{\gamma_{k+1}-\alpha_{k+1}-1}\}(1-s_1v_n)^{-\beta_1}(1-s_2v_n)^{-\beta_2}v_n^{\alpha_n-1}.
\end{align*}
In this section we investigate a relationship between $F_2^{(n)}$ and $F_{n+1,2}$ from two points of view, namely a system of LPDEs and an integral expression.
Although we imposed the constraint $a=c_1+c'-2$ on $F_2^{(n)}$ in Section \ref{Sec:Gen_Appell}, we now assume that $a=c'$ in order to write some formulas concisely.

Let us consider a function
\[
	F_2^{(n)}\left[\begin{array}{c}b_1,\ldots,b_n,b',c'\\c_1,\ldots,c_n,c'\end{array};t_1,1-t_2\right] = \sum_{i,j=0}^{\infty}\frac{(b_1)_i\ldots(b_n)_i(b')_j(c')_{i+j}}{(c_1)_i\ldots(c_n)_i(c')_j(1)_i(1)_j}t_1^i(1-t_2)^j,
\]
which satisfies a system of LPDEs
\begin{equation}\begin{split}\label{Eq:Gen_Appell_F2}
	&[t_1(\delta_1+\delta_2+c')\{\prod_{i=1}^{n}(\delta_1+b_i)\}-\delta_1\{\prod_{i=1}^{n}(\delta_1+c_i-1)\}]z = 0,\\
	&\{(1-t_2)(\delta_1+\delta_2+c')(\delta_2+b')-\delta_2(\delta_2+c'-1)\}z = 0,\\
	&[t_1\delta_2\{\prod_{i=1}^{n}(\delta_1+b_i)\}+\{(1-t_2)(\delta_2+b')-\delta_2\}\{\prod_{i=1}^{n}(\delta_1+c_i-1)\}]z = 0.
\end{split}\end{equation}
Then we obtain the following theorem by a direct computation.

\begin{thm}\label{thm:Rel_F2_F1_Eq}
System \eqref{Eq:Gen_Appell_F2} is transformed by a transformation
\[
	s_1 = t_1,\quad
	s_2 = \frac{t_1}{t_2},\quad
	y = t_2^{b'}z,
\]
to system \eqref{Eq:Gen_Appell_F1} with
\[
	\alpha_i = b_i\quad (i=1,\ldots,n),\quad
	\beta_1 = c'-b',\quad
	\beta_2 = b',\quad
	\gamma_i = c_i\quad (i=1,\ldots,n).
\]
\end{thm}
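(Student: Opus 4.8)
The plan is to read Theorem~\ref{thm:Rel_F2_F1_Eq} as an identity between two systems of differential operators and to verify it by a change of variables combined with the scalar gauge. First I would build the dictionary relating the Euler operators of the two sides. From $s_1=t_1$ and $s_2=t_1/t_2$ the chain rule gives, as operators on functions of $(t_1,t_2)$,
\[
 D=D_1+D_2=\delta_1,\qquad D_1=\delta_1+\vartheta,\qquad D_2=-\vartheta,\qquad \vartheta:=t_2\frac{\partial}{\partial t_2}.
\]
Conjugating by the gauge, i.e. replacing each operator $L$ by $t_2^{-b'}Lt_2^{b'}$ so that $Ly=t_2^{b'}\bigl(t_2^{-b'}Lt_2^{b'}\bigr)z$, leaves $\delta_1$ and the multiplications by $t_1$ and $t_1/t_2$ untouched and sends $\vartheta$ to $\vartheta+b'$.

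With the parameter identification of the theorem the elementary factors acquire clean images:
\[
 D+\alpha_i\mapsto\delta_1+b_i,\quad D+\gamma_i-1\mapsto\delta_1+c_i-1,\quad D_1+\beta_1\mapsto\delta_1+\vartheta+c',\quad D_2+\beta_2\mapsto-\vartheta,
\]
the first two because $\alpha_i=b_i,\gamma_i=c_i$, the last two because $\beta_1=c'-b'$ and $\beta_2=b'$ absorb the gauge shift. Writing $P:=\prod_{i=1}^n(\delta_1+b_i)$ and $Q:=\prod_{i=1}^n(\delta_1+c_i-1)$, all these factors commute with $P$ and $Q$, so I can pull the gauge factor $t_2^{b'}$ to the front and read off the transforms of the three operators of \eqref{Eq:Gen_Appell_F1} as explicit operators in $\delta_1,\vartheta$ and the multiplications $t_1,t_1/t_2$, acting on $z$.

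The decisive step is then to recognise these transforms as combinations of the operators of \eqref{Eq:Gen_Appell_F2}, using the two bridge identities $\tfrac{t_1}{t_2}\vartheta=t_1\tfrac{\partial}{\partial t_2}$ and $\vartheta-\tfrac{\partial}{\partial t_2}=(t_2-1)\tfrac{\partial}{\partial t_2}=\delta_2$. For example, adding the transforms of the first two equations of \eqref{Eq:Gen_Appell_F1} collapses the $Q$-terms to $-\delta_1Q$, while the $P$-terms give $t_1[\delta_1+c'+(\vartheta-\tfrac{\partial}{\partial t_2})]P=t_1(\delta_1+\delta_2+c')P$, which is precisely the first operator of \eqref{Eq:Gen_Appell_F2}. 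Likewise the transform of the second equation turns out to equal $-(t_2-1)^{-1}$ times the third operator of \eqref{Eq:Gen_Appell_F2}, as one checks by multiplying through by $t_2-1$ and using $(t_2-1)(\vartheta-\delta_2)=\delta_2$; combining the two relations expresses each transformed $F_1$-operator through \eqref{Eq:Gen_Appell_F2}, and the order-two third equation of \eqref{Eq:Gen_Appell_F1} should similarly reduce to the second operator of \eqref{Eq:Gen_Appell_F2}. Since the change of variables is bijective and the gauge is a unit, these inclusions run in both directions and the two systems coincide.

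I expect the main obstacle to be bookkeeping rather than anything conceptual: the substitution $s_2=t_1/t_2$ is adapted to the Euler operator $\vartheta=t_2\tfrac{\partial}{\partial t_2}$, whereas \eqref{Eq:Gen_Appell_F2} is written in $\delta_2=(t_2-1)\tfrac{\partial}{\partial t_2}$, the Euler operator of the shifted variable $1-t_2$. Reconciling the two forces me to carry the lower-order term $\tfrac{\partial}{\partial t_2}=\vartheta-\delta_2$ through the products $P,Q$ and to respect the non-commutation of the multiplications by $t_2$ and $t_2-1$ with both $\vartheta$ and $\delta_2$; in the order-two equation this produces genuine commutator terms such as $[\tfrac{\partial}{\partial t_2},\vartheta]=\tfrac{\partial}{\partial t_2}$ that must be tracked carefully. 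Organising the algebra around the commuting pair $\{\delta_1,\vartheta\}$ and only converting $\vartheta\to\delta_2$ at the very end, as in the sample combination above, is what keeps the verification manageable.
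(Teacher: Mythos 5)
Your proposal is correct and is essentially the paper's own proof, which establishes the theorem ``by a direct computation'': your operator dictionary $D=\delta_1$, $D_1\mapsto\delta_1+\vartheta+b'$, $D_2+\beta_2\mapsto-\vartheta$ (after the gauge shift $\vartheta\mapsto\vartheta+b'$) together with the bridge identities $\tfrac{t_1}{t_2}\vartheta=t_1\tfrac{\partial}{\partial t_2}$ and $\vartheta-\tfrac{\partial}{\partial t_2}=\delta_2$ is exactly what that computation amounts to. The one step you left as an expectation does close: writing $\vartheta=\delta_2+\tfrac{\partial}{\partial t_2}$ and using $[\tfrac{\partial}{\partial t_2},\delta_2]=\tfrac{\partial}{\partial t_2}$, the transformed third equation of \eqref{Eq:Gen_Appell_F1} collapses to $\tfrac{t_1}{t_2-1}$ times the second operator of \eqref{Eq:Gen_Appell_F2}, so the two systems generate the same ideal and the equivalence follows as you describe.
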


\begin{rem}
In order to derive the formal power series of $F_{n+1,2}$ (resp. $F_2^{(n)}$) by the Frobenius method from system \eqref{Eq:Gen_Appell_F1} (resp. \eqref{Eq:Gen_Appell_F2}), the third equation of the system is unnecessary.
We add those equations in order to insist the equivalence between two system of LPDEs in Theorem \ref{thm:Rel_F2_F1_Eq}.
\end{rem}

\begin{cor}
Integral expression \eqref{Eq:Gen_Appell_int} with $a=c'$ is transformed to \eqref{Eq:Gen_Appell_F1_int} as
\[
	F_2^{(n)}\left[\begin{array}{c}\alpha_1,\ldots,\alpha_n,\beta_2,\beta_1+\beta_2\\\gamma_1,\ldots,\gamma_n,\beta_1+\beta_2\end{array};s_1,1-\frac{s_1}{s_2}\right] = s_1^{-b'}s_2^{b'}F_{n+1,2}\left[\begin{array}{c}\alpha_1,\ldots,\alpha_n,\beta_1,\beta_2\\\gamma_1,\ldots,\gamma_n\end{array};s_1,s_2\right],
\]
where $|s_1|<1$, $|s_2|<1$ and $|s_1|+|1-s_1/s_2|<1$.
\end{cor}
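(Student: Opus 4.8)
The plan is to prove this corollary directly at the level of the integral representations, deducing \eqref{Eq:Gen_Appell_F1_int} from \eqref{Eq:Gen_Appell_int} by an explicit change of variables and a single elementary one-dimensional integration, in accordance with the substitution $s_1=t_1$, $s_2=t_1/t_2$ already used in Theorem~\ref{thm:Rel_F2_F1_Eq}. On the left-hand side I specialize the parameters of \eqref{Eq:Gen_Appell_int} to $b_k=\alpha_k$, $c_k=\gamma_k$ $(k=1,\ldots,n)$, $b'=\beta_2$, $c'=\beta_1+\beta_2$ and $a=c'=\beta_1+\beta_2$, so that the three $u_{n+1}$-dependent factors of $U(t)$ collapse to $(u_n-u_{n+1})^{-(\beta_1+\beta_2)}(t_2-u_{n+1})^{\beta_1-1}(1-u_{n+1})^{\beta_2-1}$, and I set $t_1=s_1$, $t_2=s_1/s_2$. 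Note that then $\Gamma(c'-b')=\Gamma(\beta_1)$, so the normalizing constant of \eqref{Eq:Gen_Appell_int} already matches the gamma factors that will have to cancel.

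First I would rescale the first $n$ integration variables by $u_k=s_1v_k$ $(k=1,\ldots,n)$. This carries the simplex $0<u_n<\cdots<u_1<t_1$ onto $0<v_n<\cdots<v_1<1$ with Jacobian $s_1^n$, and every factor of $U(t)$ not involving $u_{n+1}$ splits off a power of $s_1$ times the corresponding factor of $V(s)$. Collecting these $s_1$-powers is a telescoping computation: using $\sum_{k=1}^{n-1}(\alpha_k-\alpha_{k+1})=\alpha_1-\alpha_n$ one finds that the exponents sum to $\gamma_1-n-1$, which together with the Jacobian $s_1^n$ leaves the single power $s_1^{\gamma_1-1}$.

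The heart of the argument is the remaining integral over $u_{n+1}\in(1,t_2)$, namely $\int(u_n-u_{n+1})^{-(\beta_1+\beta_2)}(t_2-u_{n+1})^{\beta_1-1}(1-u_{n+1})^{\beta_2-1}\,du_{n+1}$ with $u_n=s_1v_n$. This is an Euler-type integral, and I would evaluate it by the classical formula
\[
	\int_a^b(x-a)^{p-1}(b-x)^{q-1}(x-c)^{-p-q}\,dx = B(p,q)\frac{(b-a)^{p+q-1}}{(a-c)^{q}(b-c)^{p}}\qquad(c<a<b),
\]
taken with $a=1$, $b=t_2$, $c=u_n$, $p=\beta_2$, $q=\beta_1$. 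It produces a constant multiple of $(1-u_n)^{-\beta_1}(t_2-u_n)^{-\beta_2}$; since $1-u_n=1-s_1v_n$ and $t_2-u_n=t_2(1-s_2v_n)$ because $s_1/t_2=s_2$, this is precisely the pair $(1-s_1v_n)^{-\beta_1}(1-s_2v_n)^{-\beta_2}$ occurring in $V(s)$, multiplied by $\Gamma(\beta_1)\Gamma(\beta_2)/\Gamma(\beta_1+\beta_2)$ and by powers of $(t_2-1)$ and $t_2^{-\beta_2}$. After this step the integrand of the reduced $n$-fold integral is literally $V(s)$, and \eqref{Eq:Gen_Appell_F1_int} identifies it with $F_{n+1,2}$.

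It then remains to collect the scalars. The two products of gamma functions cancel against the normalizing constant of \eqref{Eq:Gen_Appell_F1_int}; the beta factor $\Gamma(\beta_1)\Gamma(\beta_2)/\Gamma(\beta_1+\beta_2)$ cancels $\Gamma(\beta_1+\beta_2)/(\Gamma(\beta_2)\Gamma(\beta_1))$ from \eqref{Eq:Gen_Appell_int}; one has $t_1^{1-\gamma_1}s_1^{\gamma_1-1}=1$ since $t_1=s_1$; and the surviving product $(1-t_2)^{1-\beta_1-\beta_2}(t_2-1)^{\beta_1+\beta_2-1}t_2^{-\beta_2}$ reduces to $t_2^{-\beta_2}=(s_1/s_2)^{-\beta_2}=s_1^{-b'}s_2^{b'}$, which is exactly the claimed prefactor with $b'=\beta_2$. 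I expect the main obstacle to be not the algebra but the branch bookkeeping: because $u_{n+1}>1>u_n$ throughout the contour, the factors $(1-u_{n+1})^{\beta_2-1}$, $(u_n-u_{n+1})^{-(\beta_1+\beta_2)}$ and $(1-t_2)^{1-\beta_1-\beta_2}$ carry negative arguments, so the phases generated when passing to the positive-argument Euler integral must be tracked and shown to be consistent with the branch and domain conventions fixed in Appendix~\ref{App:Gen_Appell_int} for \eqref{Eq:Gen_Appell_int}, on the stated region $|s_1|<1$, $|s_2|<1$, $|s_1|+|1-s_1/s_2|<1$; apart from this (routine but delicate) point, the identity follows from the substitution and the Euler evaluation above.
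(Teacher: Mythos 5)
Your proposal is correct and follows essentially the same route as the paper: the paper also exploits the specialization $a=c'$ to evaluate the inner $u_{n+1}$-integral by an Euler beta formula, obtaining $(1-u_n)^{b'-c'}(t_2-u_n)^{-b'}$ times constants, and then applies the substitution $u_k=s_1v_k$, $t_1=s_1$, $t_2=s_1/s_2$ to identify the remaining $n$-fold integral with \eqref{Eq:Gen_Appell_F1_int}. The only cosmetic differences are the order of the two steps (the paper integrates out $u_{n+1}$ before rescaling, writing the result with $(1-t_2)^{c'-1}$ so that it cancels the prefactor of \eqref{Eq:Gen_Appell_int} with no $(t_2-1)$-versus-$(1-t_2)$ phases arising) and your more explicit power-counting and branch bookkeeping.
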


\begin{proof}
Assume that $a=c'$.
Then we have
\begin{align*}
	&\int_{1}^{t_2}(u_n-u_{n+1})^{-c'}(t_2-u_{n+1})^{c'-b'-1}(1-u_{n+1})^{b'-1}du_{n+1}\\
	&= \frac{\Gamma(b')\Gamma(c'-b')}{\Gamma(c')}(1-t_2)^{c'-1}(t_2-u_n)^{-b'}(1-u_n)^{b'-c'}.
\end{align*}
By using it we obtain
\[
	F_2^{(n)}\left[\begin{array}{c}b_1,\ldots,b_n,b',c'\\c_1,\ldots,c_n,c'\end{array};t_1,1-t_2\right] = t_1^{1-c_1}\{\prod_{k=1}^{n}\frac{\Gamma(c_k)}{\Gamma(b_k)\Gamma(c_k-b_k)}\}\int_{0<u_n<\ldots<u_1<t_1}U(t)du_1\ldots du_n,
\]
where
\begin{align*}
	U(t) &= (t_1-u_1)^{c_1-b_1-1}\{\prod_{k=1}^{n-1}u_k^{b_k-c_{k+1}}(u_k-u_{k+1})^{c_{k+1}-b_{k+1}-1}\}(t_2-u_n)^{-b'}(1-u_n)^{b'-c'}u_n^{b_n-1}.
\end{align*}
It is transformed by
\[
	u_k = s_1v_k\quad (k=1,\ldots,n),\quad
	t_1 = s_1,\quad
	t_2 = s_1/s_2,
\]
to integral expression \eqref{Eq:Gen_Appell_F1_int} with
\[
	\alpha_i = b_i\quad (i=1,\ldots,n),\quad
	\beta_1 = -b'+c',\quad
	\beta_2 = b',\quad
	\gamma_i = c_i\quad (i=1,\ldots,n).
\]
\end{proof}

\section{Conclusion}\label{Sec:Conclusion}

In this article we introduced the hypergeometric function $F_2^{(n)}$ as an extension of Appell's $F_2$.
It was obtained from a particular solution of the Painlev\'e system $\mathcal{H}^{*}_{n+1,2}$.
We also shew that $F_2^{(n)}$ with one special parameter is equivalent to the hypergeometric function $F_{n+1,2}$ which is an extension of Appell's $F_1$.
Summarizing the obtained result, we can draw the following picture.
\[\begin{array}{ccccccccc}
	\downarrow & & \downarrow & & \downarrow & & \downarrow & & \downarrow \\[4pt]
	F_2^{(n)} & \longrightarrow & I_{2n+2}/J_{2n+2} & \longleftarrow & \mathcal{H}^*_{n+2,2} & \longrightarrow & I_{2n+2}/I_{2n+2} & \longleftarrow & F_{n+2,2}| \\[4pt]
	\downarrow & & \downarrow & & \downarrow & & \downarrow & & \downarrow \\[4pt]
	F_2^{(n)}| & \longrightarrow & I_{2n+1}/J_{2n+1} & \longleftarrow & \mathcal{H}_{n+1,2} & \longrightarrow & I_{2n+1}/I_{2n+1} & \longleftarrow & F_{n+1,2} \\[4pt]
	\downarrow & & \downarrow & & \downarrow & & \downarrow & & \downarrow \\[4pt]
	F_2^{(n-1)} & \longrightarrow & I_{2n}/J_{2n} & \longleftarrow & \mathcal{H}^*_{n+1,2} & \longrightarrow & I_{2n}/I_{2n} & \longleftarrow & F_{n+1,2}| \\[4pt]
	\downarrow & & \downarrow & & \downarrow & & \downarrow & & \downarrow \\
	\vdots & & \vdots & & \vdots & & \vdots & & \vdots \\
	\downarrow & & \downarrow & & \downarrow & & \downarrow & & \downarrow \\[4pt]
	F_2 & \longrightarrow & I_4/I_4 & \longleftarrow & \mathcal{H}^*_{3,2} & \longrightarrow & I_4/I_4 & \longleftarrow & F_{3,2}| \\[4pt]
	\downarrow & & \downarrow & & \downarrow & & \downarrow & & \downarrow \\[4pt]
	F_2| & \longrightarrow & P_3/P_3 & \longleftarrow & \mathcal{H}_{2,2} & \longrightarrow & P_3/P_3 & \longleftarrow & F_1 \\[4pt]
\end{array}\]
The symbol $F_2^{(n)}|$ (resp. $F_{n+1,2}|$) stands for the function $F_2^{(n)}$ (resp. $F_{n+1,2}$) with one special parameter.
In the above picture the bottom is given by the classical result \cite{B,Fuc,G,Sla}, the part $\mathcal{H}_{n+1,2}\longrightarrow I_{2n+1}/I_{2n+1}\longleftarrow F_{n+1,2}$ by \cite{T1,T3} and the rest by this article.

The result of this article gives an example that one Painlev\'e system contains two types of rigid systems.
It suggests that if two Painlev\'e systems $\mathcal{H}^{n1,\lambda_2,\ldots,\lambda_{m+3}}$ and $\mathcal{H}^{n1,\mu_2,\ldots,\mu_{m+3}}$ are equivalent, then they contains two rigid systems, of type $\lambda_2,\ldots,\lambda_{m+3}$ and of type $\mu_2,\ldots,\mu_{m+3}$.
Here we let $\lambda_i,\mu_i$ $(i=2,\ldots,m+3)$ be partitions of a natural number $n+1$.
It is also suggested that properties of Painlev\'e systems, especially affine Weyl group symmetries, can be applied to analyses of rigid systems and hypergeometric functions.
As is seen in Remark \ref{Rem:Rel_F2_F1}, the relationship between $F_2^{(n)}$ and $F_{n+1,2}$ is derived from the symmetry of $\mathcal{H}_{n+1,2}$.

On the other hand, in \cite{T1}, the function $F_{n+1,2}$ was extended to the one in multi-variables.
Namely, we have a formal power series
\[
	F_{n+1,m} = \sum_{i_1,\ldots,i_m\in\mathbb{Z}_{\geq0}}\frac{(a_1)_{|i|}\ldots(a_n)_{|i|}(b_1)_{i_1}\ldots(b_m)_{i_m}}{(c_1)_{|i|}\ldots(c_n)_{|i|}(1)_{i_1}\ldots(1)_{i_m}}t_1^{i_1}\ldots t_m^{i_m},
\]
where $|i|=i_1+\ldots+i_m$, as an extension of $F_{n+1,2}$ and Lauricella's $F_D$.
Therefore it is a natural motivation to propose an extension of $F_2^{(n)}$ and Lauricella's $F_A$.
Although we can nominate a formal power series
\[
	F_2^{(n,m)} = \sum_{i_1,\ldots,i_m\in\mathbb{Z}_{\geq0}}\frac{(b_{1,1})_{i_1}\ldots(b_{1,n})_{i_1}(b_2)_{i_2}\ldots(b_m)_{i_m}(a)_{|i|}}{(c_{1,1})_{i_1}\ldots(c_{1,n})_{i_1}(c_2)_{i_2}\ldots(c_m)_{i_m}(1)_{i_1}\ldots(1)_{i_m}}t_1^{i_1}\ldots t_m^{i_m},
\]
we have not found any suitable rigid system or Painlev\'e system even in the case of $m=3$.
It is a future problem.
Also, we have to consider an extension of Appell's $F_4$ and Lauricella's $F_C$.
In fact, the function $F_4$ has already appeared in a particular solution of a Painlev\'e system; see Appendix \ref{App:Appell_F4}.

\begin{rem}
The function $F_A$ in $m$ variables, namely $F_2^{(1,m)}$, satisfies a system of partial differential equations
\[
	\{t_i(D+a)(\delta_i+b_i)-\delta_i(\delta_i+c_i-1)\}z = 0\quad (i=1,\ldots,m),
\]
where $\delta_i=t_i\partial/\partial t_i$ and $D=\delta_1+\ldots+\delta_m$.
It can be rewritten to a linear Pfaff system with $2^m\times2^m$ matrices
\[
	d\mathbf{w} = \{\sum_{i=1}^{m}A_id\log t_i+\sum_{\{i_1,\ldots,i_k\}\subset\{1,\ldots,m\}}A_{i_1\ldots i_k}d\log(t_{i_1}+\ldots+t_{i_k}-1)\}\mathbf{w}.
\]
If, in the case of $m=3$, we regard each of $t_1,t_2,t_3$ as an independent variable and the others as constants, then this Pfaff system becomes a Fuchsian one of type $71,71,71,71,44,41111$.
Interestingly this type of Fuchsian system is reduced by Katz's two operations to a Fuchsian system of type $21,21,21,21,111$, which becomes an origin of the 6th order Painlev\'e system $\mathcal{H}^{21,21,21,21,111}$ whose particular solution is described in terms of $F_2=F_2^{(1,2)}$.
\end{rem}

\appendix

\section{A proof of integral expressions \eqref{Eq:Gen_Appell_int} and \eqref{Eq:Gen_Appell_deg_int}}\label{App:Gen_Appell_int}

We first prove integral expression \eqref{Eq:Gen_Appell_int}.
Let us recall that
\[
	\frac{(b_k)_i}{(c_k)_i} = \frac{\Gamma(b_k+i)\Gamma(c_k)}{\Gamma(b_k)\Gamma(c_k+i)} = \frac{\mathrm{B}(b_k+i,c_k-b_k)}{\mathrm{B}(b_k,c_k-b_k)},
\]
and
\[
	\mathrm{B}(b,c) = \int_{0}^{1}u^{b-1}(1-u)^{c-1}du.
\]
Then the formal power series of $F_2^{(n)}$ is transformed to a multiple integral as
\begin{align*}
	&F_2^{(n)}\left[\begin{array}{c}b_1,\ldots,b_n,b',a\\c_1,\ldots,c_n,c'\end{array};t_1,1-t_2\right]\\
	&= \sum_{i,j=0}^{\infty}\{\prod_{k=1}^{n}\frac{\mathrm{B}(b_k+i,c_k-b_k)}{\mathrm{B}(b_k,c_k-b_k)}\}\frac{\mathrm{B}(b'+j,c'-b')}{\mathrm{B}(b',c'-b')}\frac{(a)_{i+j}}{(1)_i(1)_j}t_1^i(1-t_2)^j\\
	&= \{\prod_{k=1}^{n}\frac{1}{\mathrm{B}(b_k,c_k-b_k)}\}\frac{1}{\mathrm{B}(b',c'-b')}\sum_{i,j=0}^{\infty}\int_{[0,1]^{n+1}}\{\prod_{k=1}^{n}v_k^{b_k+i-1}(1-v_k)^{c_k-b_k-1}\}v_{n+1}^{b'+j-1}(1-v_{n+1})^{c'-b'-1}\\
	&\quad \times \frac{(a)_{i+j}}{(1)_i(1)_j}t_1^i(1-t_2)^jdv_1\ldots dv_{n+1}\\
	&= \{\prod_{k=1}^{n}\frac{1}{\mathrm{B}(b_k,c_k-b_k)}\}\frac{1}{\mathrm{B}(b',c'-b')}\int_{[0,1]^{n+1}}\{\prod_{k=1}^{n}v_k^{b_k-1}(1-v_k)^{c_k-b_k-1}\}v_{n+1}^{b'-1}(1-v_{n+1})^{c'-b'-1}\\
	&\quad \times \{1-t_1v_1\ldots v_n-(1-t_2)v_{n+1}\}^{-a}dv_1\ldots dv_{n+1}.
\end{align*}
Hence we obtain integral expression \eqref{Eq:Gen_Appell_int} by a transformation
\[
	u_k = t_1v_1\ldots v_k\quad (k=1,\ldots,n),\quad
	u_{n+1} = 1 - (1-t_2)v_{n+1}.
\]

We next prove integral expression \eqref{Eq:Gen_Appell_deg_int}.
Similarly as above we have
\begin{equation}\begin{split}\label{Eq:Gen_Appell_deg_int_Proof_1}
	&F_2^{(n)}\left[\begin{array}{c}b_1,\ldots,b_n,b'-c'+1,c_1-1\\c_1-1,c_2,\ldots,c_n,2-c'\end{array};t_1,1-t_2\right]\\
	&= \sum_{i,j=0}^{\infty}\{\prod_{k=2}^{n}\frac{\mathrm{B}(b_k+i,c_k-b_k)}{\mathrm{B}(b_k,c_k-b_k)}\}\frac{\mathrm{B}(b'-c'+1+j,1-b')}{\mathrm{B}(b'-c'+1,1-b')}\frac{(c_1-1+i)_j(b_1)_{i+j}}{(b_1+i)_j(1)_i(1)_j}t_1^i(1-t_2)^j\\
	&= \{\prod_{k=2}^{n}\frac{1}{\mathrm{B}(b_k,c_k-b_k)}\}\frac{1}{\mathrm{B}(b'-c'+1,1-b')}\int_{[0,1]^{n+1}}\{\prod_{k=1}^{n-1}v_k^{b_{k+1}-1}(1-v_k)^{c_{k+1}-b_{k+1}-1}\}v_n^{b'-c'}(1-v_n)^{-b'}\\
	&\quad \times \sum_{i,j=0}^{\infty}\frac{(c_1-1+i)_j(b_1)_{i+j}}{(b_1+i)_j(1)_i(1)_j}(t_1v_1\ldots v_{n-1})^i\{(1-t_2)v_n\}^jdv_1\ldots dv_n.
\end{split}\end{equation}
On the other hand, by using an equation
\[
	\frac{(c_1-1+i)_j}{(b_1+i)_j} = \sum_{k=0}^{j}\frac{(c_1-b_1-1)_k}{(b_1+i+j-k)_k}\frac{(1)_j}{(1)_k(1)_{j-k}},
\]
we have
\begin{equation}\begin{split}\label{Eq:Gen_Appell_deg_int_Proof_2}
	&\sum_{i,j=0}^{\infty}\frac{(c_1-1+i)_j(b_1)_{i+j}}{(b_1+i)_j(1)_i(1)_j}(t_1v_1\ldots v_{n-1})^i\{(1-t_2)v_n\}^j\\
	&= \sum_{i,j=0}^{\infty}\sum_{k=0}^{j}\frac{(b_1)_{i+j}(c_1-b_1-1)_k}{(b_1+i+j-k)_k(1)_i(1)_k(1)_{j-k}}(t_1v_1\ldots v_{n-1})^i\{(1-t_2)v_n\}^j\\
	&= \sum_{i=0}^{\infty}\frac{1}{(1)_i}(t_1v_1\ldots v_{n-1})^i\sum_{j=0}^{\infty}\sum_{k=0}^{j}\frac{(b_1)_{i+j-k}(c_1-b_1-1)_k}{(1)_k(1)_{j-k}}\{(1-t_2)v_n\}^j\\
	&= \sum_{i=0}^{\infty}\frac{1}{(1)_i}(t_1v_1\ldots v_{n-1})^i\sum_{k=0}^{\infty}\sum_{l=0}^{\infty}\frac{(b_1)_{i+l}(c_1-b_1-1)_k}{(1)_k(1)_l}\{(1-t_2)v_n\}^{k+l}\\
	&= \sum_{i,j=0}^{\infty}\frac{(b_1)_{i+j}}{(1)_i(1)_j}(t_1v_1\ldots v_{n-1})^i\{(1-t_2)v_n\}^j\sum_{k=0}^{\infty}\frac{(c_1-b_1-1)_k}{(1)_k}\{(1-t_2)v_n\}^k\\
	&= \{1-t_1v_1\ldots v_{n-1}-(1-t_2)v_n\}^{-b_1}\{1-(1-t_2)v_n\}^{b_1-c_1+1}.
\end{split}\end{equation}
Substituting equation \eqref{Eq:Gen_Appell_deg_int_Proof_2} to \eqref{Eq:Gen_Appell_deg_int_Proof_1}, we obtain
\begin{align*}
	&F_2^{(n)}\left[\begin{array}{c}b_1,\ldots,b_n,b'-c'+1,c_1-1\\c_1-1,c_2,\ldots,c_n,c'\end{array};t_1,1-t_2\right]\\
	&= \{\prod_{k=2}^{n}\frac{1}{\mathrm{B}(b_k,c_k-b_k)}\}\frac{1}{\mathrm{B}(b'-c'+1,1-b')}\int_{[0,1]^{n+1}}\{\prod_{k=1}^{n-1}v_k^{b_{k+1}-1}(1-v_k)^{c_{k+1}-b_{k+1}-1}\}v_n^{b'-c'}(1-v_n)^{-b'}\\
	&\quad \times \{1-t_1v_1\ldots v_{n-1}-(1-t_2)v_n\}^{-b_1}\{1-(1-t_2)v_n\}^{b_1-c_1+1}dv_1\ldots dv_n.
\end{align*}
It is transformed to integral expression \eqref{Eq:Gen_Appell_deg_int} by
\[
	u_k = t_1v_1\ldots v_k\quad (k=1,\ldots,n-1),\quad
	u_n = 1 - (1-t_2)v_n.
\]

\section{The higher order Painlev\'e system $\mathcal{H}^{31,31,22,22,22}$ and the Appell hypergeometric function $F_4$}\label{App:Appell_F4}

The Painlev\'e system $\mathcal{H}^{31,31,22,22,22}$ given in \cite{Su2} is expressed as the Hamiltonian system
\[
	t_i(t_i-1)\frac{\partial q_j}{\partial t_i} = \frac{\partial H_i}{\partial p_j},\quad
	t_i(t_i-1)\frac{\partial p_j}{\partial t_i} = -\frac{\partial H_i}{\partial q_j},\quad (i\in\mathbf{Z}/2\mathbf{Z};j=1,2,3),
\]
with
\begin{align*}
	H_i &= H^{Gar}_i - (\alpha_1-\alpha_2)\frac{t_1}{t_1-t_2}q_2\{(t_i-1)p_i-(t_2-1)p_{i+1}\}\\
	&\quad + \delta_{i,2}(\alpha_1-\alpha_2)t_2(q_2-1)p_2 - (t_i+1)q_3^2p_3^2 + \{\alpha_1+\alpha_3-1+(\alpha_1+\alpha_4)t_i\}q_3p_3\\
	&\quad + q_iq_3p_3(2q_1p_1+2q_2p_2+q_3p_3+2\alpha_0+\alpha_5+1) + t_iq_{i+1}p_3(q_3p_3-\alpha_1+\alpha_2)\\
	&\quad - 2(t_i+1)q_iq_3p_ip_3 - q_3p_{i+1}(2q_ip_i+q_{i+1}p_{i+1}+2q_3p_3+2\alpha_0+\alpha_2+\alpha_5+1)\\
	&\quad + 2t_iq_3p_ip_3 + \frac{1}{t_i-t_{i+1}}q_3\{t_i(t_i-1)p_i^2-2t_i(t_{i+1}-1)p_1p_2+(t_i-1)t_{i+1}p_{i+1}^2\},
\end{align*}
where $H^{Gar}_i$ is the Hamiltonian of the Garnier system in two variables
\begin{align*}
	H^{Gar}_i &= q_i(q_i-1)(q_i-t_i)p_i^2 - (\alpha_{i+1}+\alpha_3-1)q_i(q_i-1)p_i - \alpha_4q_i(q_i-t_i)\\
	&\quad - \alpha_i(q_i-1)(q_i-t_i)p_i + \alpha_0(\alpha_0+\alpha_5+1)q_i\\
	&\quad + q_1q_2p_{i+1}(2q_ip_i+q_{i+1}p_{i+1}+2\alpha_0+\alpha_5+1)\\
	&\quad - \frac{1}{t_i-t_{i+1}}q_1q_2\{t_i(t_i-1)p_i^2-2t_i(t_{i+1}-1)p_1p_2+(t_i-1)t_{i+1}p_{i+1}^2\}\\
	&\quad + \alpha_i\frac{t_i}{t_i-t_{i+1}}q_{i+1}\{(t_i-1)p_i-(t_{i+1}-1)p_{i+1}\} - \alpha_{i+1}\frac{(t_i-1)t_{i+1}}{t_i-t_{i+1}}q_i(p_i-p_{i+1}),
\end{align*}
and parameters satisfy a relation $2\alpha_0+\alpha_1+\alpha_2+\alpha_3+\alpha_4+\alpha_5=0$.

In the system $\mathcal{H}^{31,31,22,22,22}$ we assume that
\[
	q_1 = q_2 = q_3 = \alpha_1 = 0.
\]
We also define a dependent variable $w_0$ by
\[
	\frac{\partial}{\partial t_i}\log w_0 = \frac{p_i-(\alpha_0+\alpha_5+1)}{t_i-1} - \frac{\alpha_3}{t_i}\quad (i\in\mathbf{Z}/2\mathbf{Z}),
\]
and set
\[
	\frac{w_1}{w_0} = -t_1p_1,\quad
	\frac{w_2}{w_0} = -t_2p_2,\quad
	\frac{w_3}{w_0} = t_1t_2(p_1p_2-\alpha_2p_3).
\]
Then we have

\begin{thm}[\cite{Su2}]
A vector of dependent variables $\mathbf{w}={}^t[w_0,w_1,w_2,w_3]$ defined above satisfies a linear Pfaff system
\begin{equation}\label{Eq:F4_Pfaff}
	d\mathbf{w} = \{A_1^{(1)}d\log(t_1-1)+A_0^{(1)}d\log t_1+A_td\log(t_1-t_2)+A_1^{(2)}d\log(t_2-1)+A_0^{(2)}d\log t_2\}\mathbf{w},
\end{equation}
with matrices
\begin{align*}
	&A_1^{(1)} = \begin{bmatrix}-(\alpha_0+\alpha_5+1)&-1&0&0\\\alpha_0(\alpha_0+\alpha_5+1)&\alpha_0&0&0\\0&0&-(\alpha_0+\alpha_2+\alpha_5+1)&-1\\0&0&(\alpha_0+\alpha_2)(\alpha_0+\alpha_2+\alpha_5+1)&\alpha_0+\alpha_2\end{bmatrix},\\
	&A_0^{(1)} = \begin{bmatrix}-\alpha_3&1&0&0\\0&0&0&0\\0&\alpha_2&-\alpha_3&1\\0&0&0&0\end{bmatrix},\quad
	A_t = \begin{bmatrix}0&0&0&0\\0&\alpha_2&-\alpha_2&0\\0&-\alpha_2&\alpha_2&0\\0&0&0&0\end{bmatrix},
\end{align*}
and
\[
	A_1^{(2)} = E_{23}A_1^{(1)}E_{23},\quad
	A_0^{(2)} = E_{23}A_0^{(1)}E_{23},\quad
	E_{23} = \begin{bmatrix}1&0&0&0\\0&0&1&0\\0&1&0&0\\0&0&0&1\end{bmatrix}.
\]
\end{thm}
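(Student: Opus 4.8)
The plan is to realize \eqref{Eq:F4_Pfaff} as the linearization of $\mathcal{H}^{31,31,22,22,22}$ along the particular-solution locus $q_1=q_2=q_3=\alpha_1=0$, and then to verify the stated matrices by an organized, if lengthy, direct computation.

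First I would confirm that the locus $\{q_1=q_2=q_3=0\}$ is invariant under both commuting flows when $\alpha_1=0$. By the Hamilton equations $t_i(t_i-1)\partial q_j/\partial t_i=\partial H_i/\partial p_j$, this reduces to showing that $\partial H_i/\partial p_j$ vanishes identically on the locus. This is not term-by-term trivial: the Garnier part $H^{Gar}_i$ leaves a residual contribution $-\alpha_i t_i$ to $\partial H_i/\partial p_i$, which is cancelled by the $\delta_{i,2}$-correction term of $H_i$ precisely because $\alpha_1=0$, while for $j\neq i$ every surviving monomial still carries a factor $q_1$, $q_2$ or $q_3$. Granting invariance, the momenta $p_1,p_2,p_3$ evolve according to the reduced equations obtained by restricting $t_i(t_i-1)\partial p_j/\partial t_i=-\partial H_i/\partial q_j$ to $q=0$; these are Riccati-type (quadratic in the momenta, the offending terms being $t_ip_i^2$ and $p_1p_2$), which is exactly why a linearizing substitution is needed.

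Second, I would establish that the defining $1$-form of $\log w_0$ is closed, so that $w_0$ exists: writing it as $\omega_1\,dt_1+\omega_2\,dt_2$, the cross-derivative identity $\partial_{t_2}\omega_1=\partial_{t_1}\omega_2$ follows upon substituting the reduced $p$-equations. Then I would differentiate the three normalized quantities $w_1/w_0=-t_1p_1$, $w_2/w_0=-t_2p_2$ and $w_3/w_0=t_1t_2(p_1p_2-\alpha_2p_3)$, insert both the reduced $p$-equations and $d\log w_0$, and collect each $dw_k$ as a combination of $w_0,w_1,w_2,w_3$ with rational coefficients in $(t_1,t_2)$. Decomposing those coefficients into partial fractions along $t_i-1$, $t_i$ and $t_1-t_2$ reads off the entries of $A_1^{(1)},A_0^{(1)},A_t,A_1^{(2)},A_0^{(2)}$. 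The swap symmetry $H_2(t_1,t_2;p_1,p_2,p_3)=H_1(t_2,t_1;p_2,p_1,p_3)$ is inherited by the whole construction and corresponds on $\mathbf{w}$ to conjugation by $E_{23}$; this halves the work and yields $A_1^{(2)}=E_{23}A_1^{(1)}E_{23}$ and $A_0^{(2)}=E_{23}A_0^{(1)}E_{23}$ for free.

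The main obstacle will be the computation of $dw_3$. Since $w_3/w_0$ is quadratic in the momenta, its derivative brings in $p_1p_2$, $p_3$ and the off-diagonal Garnier coupling carried by the $1/(t_1-t_2)$ terms in $H_i$, and a priori this could generate a quantity outside $\mathrm{span}\{w_0,w_1,w_2,w_3\}$. The delicate point is to check that all higher-degree-in-$p$ contributions cancel and that what remains closes back into the four given functions with exactly the stated coefficients; this must happen because the four functions genuinely satisfy a rank-four system, but the cancellations are where the bookkeeping is most error-prone. As consistency checks I would verify the $E_{23}$-symmetry of the answer and the flatness condition $d\Omega=\Omega\wedge\Omega$ of the connection $\Omega$ appearing in \eqref{Eq:F4_Pfaff}.
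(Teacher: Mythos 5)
Your overall strategy---restrict $\mathcal{H}^{31,31,22,22,22}$ to the locus $q_1=q_2=q_3=0$ with $\alpha_1=0$, check invariance of that locus, integrate the closed $1$-form defining $\log w_0$, linearize the reduced Riccati-type equations for $(p_1,p_2,p_3)$ through the stated substitutions, and read off the residue matrices by partial fractions in $t_1-1$, $t_1$, $t_1-t_2$, $t_2-1$, $t_2$---is exactly the direct computation behind this theorem (the paper itself gives no proof here, citing \cite{Su2}; its analogous Theorem in Section \ref{Sec:Sch_HGE} is likewise established ``by a direct computation''). Your account of the invariance mechanism is also accurate: the Garnier part leaves the residue $-\alpha_i t_i$ in $\partial H_i/\partial p_i$ at $q=0$, which vanishes outright for $i=1$ since $\alpha_1=0$ and is cancelled for $i=2$ by the $\delta_{i,2}$-term, whose contribution $(\alpha_2-\alpha_1)t_2$ matches $-\alpha_2t_2$ precisely when $\alpha_1=0$.

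The one genuine flaw is the symmetry you invoke to obtain $A_1^{(2)}$ and $A_0^{(2)}$ ``for free''. The identity $H_2(t_1,t_2;p_1,p_2,p_3)=H_1(t_2,t_1;p_2,p_1,p_3)$ is false: the correction terms proportional to $(\alpha_1-\alpha_2)$, namely $-(\alpha_1-\alpha_2)\frac{t_1}{t_1-t_2}q_2\{(t_i-1)p_i-(t_2-1)p_{i+1}\}$ and $\delta_{i,2}(\alpha_1-\alpha_2)t_2(q_2-1)p_2$, involve $q_2$ and $\frac{t_1}{t_1-t_2}$ for \emph{both} values of $i$, so they are not exchanged under the swap $(t_1,q_1,p_1)\leftrightarrow(t_2,q_2,p_2)$, and they do not vanish when $\alpha_1=0$. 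What is true---and what must be verified by computation, since it follows from no identity of the Hamiltonians---is that the \emph{reduced} equations on the locus are swap-symmetric, through nontrivial cancellations. For example, in $\partial H_2/\partial q_2|_{q=0}$ the Garnier term $-\alpha_2(q_2-1)(q_2-t_2)p_2$ contributes $\alpha_2(1+t_2)p_2$ (its $i=1$ counterpart is absent because it carries $\alpha_1$), the $\delta_{2,2}$-term contributes $-\alpha_2t_2p_2$, and their sum $\alpha_2p_2$ restores the coefficient $(\alpha_2+\alpha_3-1)p_2$ appearing in $\partial H_1/\partial q_1|_{q=0}$; similarly, the asymmetric $q_2$-correction supplies $\alpha_2\frac{t_1(t_2-1)}{t_1-t_2}(p_2-p_1)$, which is exactly the swap of the Garnier contribution $-\alpha_2\frac{(t_1-1)t_2}{t_1-t_2}(p_1-p_2)$, whose own $i=2$ counterpart vanishes with $\alpha_1$. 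So the conclusion $A_1^{(2)}=E_{23}A_1^{(1)}E_{23}$, $A_0^{(2)}=E_{23}A_0^{(1)}E_{23}$ is correct, but it is not free: either verify the swap symmetry of the reduced system first (as above), or compute the $t_2$-equations directly; as written, that step of your argument rests on a false premise.
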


The Riemann scheme of system \eqref{Eq:F4_Pfaff} is given by
\[
	\left\{\begin{array}{cccc}
		t_1=1,t_2=1 & t_1=0,t_2=0 & t_1=\infty,t_2=\infty & t_1=t_2 \\
		-\alpha_5-1 & -\alpha_3 & \alpha_0+\alpha_3+\alpha_5+1 & 2\alpha_2 \\
		-\alpha_5-1 & -\alpha_3 & \alpha_0+\alpha_3+\alpha_5+1 & 0 \\
		0 & 0 & -\alpha_0-\alpha_2 & 0 \\
		0 & 0 & -\alpha_0-\alpha_2 & 0 \\
	\end{array}\right\}.
\]
Hence we can find that system \eqref{Eq:F4_Pfaff} is the Pfaff one $P_{4,4}/P_{4,4}$.
Namely, if we regard each of $t_1,t_2$ as an independent variable and another as a constant, then this Pfaff system becomes a rigid one of type $31,22,22,22$.

The solution of \eqref{Eq:F4_Pfaff} can be described in terms of the Appell hypergeometric function $F_4$.
In \cite{Kato} it was shown that $F_4$ satisfies a system of linear partial differential equations
\begin{equation}\begin{split}\label{Eq:F4}
	&\{t_1(\delta_1+a)(\delta_1+b)-\delta_1(\delta_1+c_1)+\frac{(a+b-c_1-c_2+1)t_1(t_2-1)(\delta_1-\delta_2)}{t_1-t_2}\}z = 0,\\
	&\{t_2(\delta_2+a)(\delta_2+b)-\delta_2(\delta_2+c_1)+\frac{(a+b-c_1-c_2+1)t_2(t_1-1)(\delta_2-\delta_1)}{t_2-t_1}\}z = 0.
\end{split}\end{equation}
Thanks to this fact, we obtain

\begin{thm}
Let $z$ be a dependent variable satisfying system \eqref{Eq:F4} with parameters
\[
	a = \alpha_0+\alpha_3+\alpha_5+1,\quad
	b = -\alpha_0-\alpha_2,\quad
	c_1 = \alpha_3+1,\quad
	c_2 = \alpha_5+1,
\]
We also set
\begin{align*}
	w_1 &= -[(\delta_1-\alpha_0)(\delta_2-\alpha_0)+\frac{\alpha_2\{t_2(\delta_1-\alpha_0)-t_1(\delta_2-\alpha_0)\}}{t_1-t_2}]z,\\
	w_2 &= \alpha_0[(\delta_1+\alpha_3)(\delta_2-\alpha_0)+\frac{\alpha_2\{t_2(\delta_1+\alpha_3)-t_1(\delta_2+\alpha_3)\}}{t_1-t_2}]z,\\
	w_3 &= \alpha_0[(\delta_1-\alpha_0)(\delta_2+\alpha_3)+\frac{\alpha_2\{t_2(\delta_1+\alpha_3)-t_1(\delta_2+\alpha_3)\}}{t_1-t_2}]z,\\
	w_4 &= -\alpha_0(\alpha_0+\alpha_2)[(\delta_1+\alpha_3)(\delta_2+\alpha_3)+\frac{\alpha_2\{t_2(\delta_1+\alpha_3)-t_1(\delta_2+\alpha_3)\}}{t_1-t_2}]z.
\end{align*}
Then a vector of dependent variables $\mathbf{w}={}^t[w_0,w_1,w_2,w_3]$ satisfies system \eqref{Eq:F4_Pfaff}.
\end{thm}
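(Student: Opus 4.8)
The plan is to establish the theorem by direct computation, verifying that the four functions built from $z$ satisfy the first-order system \eqref{Eq:F4_Pfaff}, exactly in the spirit of the argument in Section \ref{Sec:GA_deg_Pfaff_Proof}. First I would rewrite \eqref{Eq:F4_Pfaff} in the equivalent scalar form obtained by reading off the $dt_1$ and $dt_2$ components and multiplying by $t_1$, $t_2$ respectively,
\[
	\delta_1\mathbf{w} = \Bigl(\frac{t_1}{t_1-1}A_1^{(1)}+A_0^{(1)}+\frac{t_1}{t_1-t_2}A_t\Bigr)\mathbf{w},\quad \delta_2\mathbf{w} = \Bigl(\frac{t_2}{t_2-1}A_1^{(2)}+A_0^{(2)}-\frac{t_2}{t_1-t_2}A_t\Bigr)\mathbf{w},
\]
where $\delta_i=t_i\partial/\partial t_i$, so that each component relation asserts that $\delta_i w_k$ equals a prescribed $\mathbf{Q}(t_1,t_2)$-linear combination of $w_0,w_1,w_2,w_3$.

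Next I would exploit that the Appell $F_4$ system \eqref{Eq:F4} is holonomic of rank $4$ and that $z,\delta_1 z,\delta_2 z,\delta_1\delta_2 z$ span its solution module over the field of rational functions; the two equations in \eqref{Eq:F4} serve precisely to express $\delta_1^2 z$ and $\delta_2^2 z$ in this basis. Expanding the definitions of the four components, each is a second-order operator of exactly the shape occurring in \eqref{Eq:F4} applied to $z$, hence $\mathbf{w}=G\,\mathbf{v}$ with $\mathbf{v}={}^t[z,\delta_1 z,\delta_2 z,\delta_1\delta_2 z]$ and $G$ an invertible matrix rational in $t_1,t_2$. The theorem is therefore equivalent to the gauge-equivalence $\Omega_{\mathbf{w}}=(dG+G\,\Omega_{\mathbf{v}})G^{-1}$, where $\Omega_{\mathbf{v}}$ is the connection read off directly from \eqref{Eq:F4}. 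Concretely I would apply $\delta_1$ and $\delta_2$ to each component, reduce every $\delta_1^2 z$ or $\delta_2^2 z$ via \eqref{Eq:F4}, collect the outcome in the basis $\mathbf{v}$, re-express it through the $w_j$, and match against the five matrices.

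A useful economy is that \eqref{Eq:F4} is invariant under $t_1\leftrightarrow t_2$, $\delta_1\leftrightarrow\delta_2$, and that the matrices satisfy $A_1^{(2)}=E_{23}A_1^{(1)}E_{23}$, $A_0^{(2)}=E_{23}A_0^{(1)}E_{23}$; the defining expressions for the components are likewise interchanged by this involution (which swaps the two middle entries). Hence once the $t_1$-equation, i.e.\ the block $A_1^{(1)},A_0^{(1)},A_t$, has been checked, the $t_2$-equation follows automatically, halving the labour.

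The main obstacle will be the cross terms of \eqref{Eq:F4} carrying the factor $1/(t_1-t_2)$ and the coefficient $a+b-c_1-c_2+1$, which for the chosen parameters simplifies to $-\alpha_2$. These denominators collide with the $1/(t_1-t_2)$ already present both in the definitions of the components and in $A_t$, so the delicate point is to show that the apparent double poles along $t_1=t_2$ cancel, leaving exactly the simple pole recorded by $A_t$, whose only nonzero entries are the $\pm\alpha_2$. The fact that the cross-term coefficient is precisely $-\alpha_2$, and that the Riemann scheme assigns the single nontrivial exponent $2\alpha_2$ at $t_1=t_2$, provides a consistency check that makes the matching plausible; carrying out the partial-fraction bookkeeping so that these poles combine correctly is where the genuine effort lies.
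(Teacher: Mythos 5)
Your proposal is correct and is essentially the paper's own (implicit) argument: the paper states this theorem as following from Kato's system \eqref{Eq:F4} by direct computation, which is precisely the verification you describe — write the four components as rational combinations of the rank-$4$ basis $z,\delta_1z,\delta_2z,\delta_1\delta_2z$, use \eqref{Eq:F4} (whose cross-term coefficient $a+b-c_1-c_2+1$ indeed equals $-\alpha_2$ under the stated parameter matching) to reduce $\delta_1^2z$ and $\delta_2^2z$, and match the result against the matrices $A_1^{(1)},A_0^{(1)},A_t,A_1^{(2)},A_0^{(2)}$. Your symmetry shortcut is also sound and is a genuine economy the paper does not record: system \eqref{Eq:F4} as printed is invariant under $t_1\leftrightarrow t_2$, $\delta_1\leftrightarrow\delta_2$, the defining operators of the four components are respectively fixed, swapped, swapped, fixed by this involution, and $E_{23}A_tE_{23}=A_t$, so checking the $\delta_1$-equation alone suffices.
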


\begin{rem}
In \cite{Kato} the linear Pfaff system with $4\times4$ matrices was also given.
It is transformed to system \eqref{Eq:F4_Pfaff} via a certain gauge transformation.
\end{rem}

\section*{Acknowledgement}

The author would like to express his gratitude to Professors Akihito Ebisu, Yoshishige Haraoka, Tom Koornwinder, Masatoshi Noumi, Hiroyuki Ochiai, Yohsuke Ohyama, Toshio Oshima, Hidetaka Sakai, Teruhisa Tsuda and Yasuhiko Yamada for helpful comments and advices.
This work was supported by JSPS KAKENHI Grant Number 15K04911.


\end{document}